\numberwithin{equation}{section}
\numberwithin{figure}{section}
\theoremstyle{plain}
\newtheorem{thm}{\protect\theoremname}
  \theoremstyle{plain}
  \newtheorem{prop}[thm]{\protect\propositionname}
  \theoremstyle{plain}
  \newtheorem{lem}[thm]{\protect\lemmaname}
  \theoremstyle{plain}
  \theoremstyle{remark}
  \newtheorem{rem}[thm]{\protect\remarkname}
  \theoremstyle{definition}
  \theoremstyle{plain}
  \theoremstyle{definition}
  \newtheorem{defn}[thm]{\protect Definition}
  \providecommand{\corollaryname}{Corollary}
  \providecommand{\examplename}{Example}
  \providecommand{\lemmaname}{Lemma}
  \providecommand{\propositionname}{Proposition}
  \providecommand{\questionname}{Question}
  \providecommand{\remarkname}{Remark}
  \providecommand{\theoremname}{Theorem}
\DeclareMathOperator{\Spec}{Spec}
\DeclareMathOperator{\Aut}{Aut}
\DeclareMathOperator{\GL}{GL}
\DeclareMathOperator{\Ker}{Ker}
\DeclareMathOperator{\Imag}{Im}
\DeclareMathOperator{\Bir}{Bir}
\DeclareMathOperator{\KL}{\Gamma L}
\DeclareMathOperator{\Sym}{Sym}
\DeclareMathOperator{\Z}{Z}
\DeclareMathOperator{\Cent}{C}
\DeclareMathOperator{\H0}{H^0}
\DeclareMathOperator{\Diff}{Diff}
\DeclareMathOperator{\T}{T}
\begin{document}

\title[The birational automorphism group is nilpotently Jordan]{Finite subgroups of the birational automorphism group are `almost' nilpotent}

\author{Attila Guld}
\email{guld.attila@renyi.mta.hu}

\thanks{The research was partly supported by the National Research, Development and Innovation Office (NKFIH) Grant No. K120697. 
The project leading to this application has received funding from the European Research Council (ERC) under the European Union's Horizon 2020 research and innovation programme (grant agreement No 741420).}

\address{
R\'enyi Alfr\'ed Matematikai Kutat\'oint\'ezet\\
Re\'altanoda utca 13-15.\\
Budapest, H1053\\
Hungary}

\begin{abstract}
We call a group $G$ nilpotently Jordan of class at most $c$ $(c\in\mathbb{N})$ 
if there exists a constant $J\in\mathbb{Z}^+$ such that every finite subgroup $H\leqq G$ contains a nilpotent subgroup $K\leqq H$ of class at most $c$ and index at most $J$.\\
We show that the birational automorphism group of a $d$ dimensional variety over a field of characteristic zero is nilpotently Jordan of class at most $d$. 
\end{abstract}

\keywords{birational automorphism group, birational selfmap, nilpotent group, Jordan group}
\maketitle

\section{Introduction}

\begin{defn}
\label{nilpJord}
A group $G$ is called Jordan, solvably Jordan or nilpotently Jordan of class at most $c$ ($c\in\mathbb{N}$) if there exists a constant $J=J(G)\in\mathbb{Z}^+$, only depending on $G$,
 such that every finite subgroup $H\leqq G$ has a subgroup $K\leqq H$ such that
$|H:K|\leqq J$ and $K$ is Abelian, solvable or nilpotent of class at most $c$, respectively.
\end{defn}

The notion of Jordan groups and solvably Jordan groups was introduced by V. L. Popov (Definition 2.1 in \cite{Po11}) and Yu. Prokhorov and C. Shramov (Definition 8.1 in \cite{PS14}), respectively.

%\begin{defn}
%\label{nilpJord}
%A group $G$ is called boundedly generated if there exists a constant $r=r(G)\in\mathbb{Z}^+$, only depending on $G$,
% such that every finite subgroup $H\leqq G$ can be generated by at most $r$ elements.
%\end{defn}

\begin{thm}
\label{main}
The birational automorphism group of a $d$ dimensional variety over a field of characteristic zero is %boundedly generated  and 
nilpotently Jordan of class at most $d$.
\end{thm}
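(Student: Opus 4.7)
The plan is to proceed by induction on the dimension $d$. For $d=0$ the group $\Bir(X)$ is trivial. For $d=1$ I would pass to the smooth projective model $C$, whereby $\Bir(X) = \Aut(C)$: if $g(C) \geq 2$ then Hurwitz' bound $|\Aut(C)| \leq 84(g-1)$ caps every finite subgroup uniformly; if $g(C) = 1$ the translation subgroup supplies a bounded-index abelian subgroup of any finite $H \leq \Aut(C)$; and if $g(C) = 0$ the classical Jordan theorem for $\PGL_2$ applies. Each case produces a bounded-index abelian subgroup, i.e.\ nilpotent of class at most $1 = d$.

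For $\dim X = d \geq 2$ I would invoke the $H$-equivariant Minimal Model Program of Prokhorov and Shramov (which, in characteristic zero, rests on Birkar's proof of the BAB conjecture) to replace $X$ by an $H$-equivariant birational model carrying an $H$-equivariant Mori fibre space $f : Y \to S$, with $\dim S < d$ and geometric generic fibre $Y_{\bar{\eta}}$ a Fano variety of Picard number one. Setting $H_\eta = \Ker\bigl(H \to \Bir(S)\bigr)$ produces the exact sequence
\[
1 \longrightarrow H_\eta \longrightarrow H \longrightarrow H_S \longrightarrow 1.
\]
The inductive hypothesis applied to $H_S \leq \Bir(S)$ gives a subgroup $\overline{K} \leq H_S$ of index bounded only in terms of $X$ that is nilpotent of class at most $d - 1$; pulling it back to $H$ yields $K \leq H$ of bounded index with $K/(K \cap H_\eta)$ nilpotent of class at most $d - 1$.

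The heart of the argument, and the step I expect to be the main obstacle, is to arrange, after a further bounded-index reduction, that $K \cap H_\eta$ lies inside $\Z(K)$; the upper central series of the resulting subgroup then has length at most $d$, closing the induction. For this I would exploit that boundedness of Fano varieties of given dimension and Picard rank one (BAB) places the geometric automorphism group of $Y_{\bar{\eta}}$ inside a linear algebraic group of dimension bounded only in terms of $d$. A Jordan-type theorem for finite subgroups of such algebraic groups then yields an abelian subgroup $A \leq H_\eta$ of bounded index, embedded in a maximal torus of the ambient algebraic group. Since the maximal torus is determined uniquely up to conjugation by a Weyl group whose order is bounded only in terms of $d$, one last bounded-index reduction trivialises the conjugation action of $K$ on $A$ and places $A$ inside $\Z(K)$.

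Tracking the uniformity of each constant appearing along the way---in the equivariant MMP, in BAB, and in the Jordan and Weyl-group bounds---ensures that the final index $|H : K|$ depends only on $X$, which is precisely the uniformity required by the definition of nilpotently Jordan.
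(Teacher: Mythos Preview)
Your overall strategy---equivariant MMP, induction, boundedness of Fanos, Jordan for the fibre---parallels the paper's, but two of your steps do not go through as written.

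\textbf{Uniformity of the inductive constant.} You induct on $\dim X$ and apply the hypothesis to $H_S\leqq\Bir(S)$, asserting that the resulting index bound depends only on $X$. But $S$ is the base of an $H$-equivariant Mori fibre space, so its birational type varies with $H$; the constant $J(S)$ produced by induction therefore varies with $H$ as well, and you do not obtain a single bound valid for every finite $H\leqq\Bir(X)$. (A related oversight: if $X$ is non-uniruled there is no Mori fibre space at all, and you need Theorem~\ref{nu} to dispose of that case.) The paper circumvents this by first taking the MRC fibration $\phi:X\dashrightarrow Z$, whose base $Z$ is a birational invariant of $X$ and carries a $G$-action by functoriality, and then running the MMP \emph{relative to $Z$}. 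The induction (Theorem~\ref{AlmostMain}) is on the relative dimension $e=\dim X-\dim Z$ with $Z$ held fixed: the intermediate base $Y$ still depends on $G$, but the tuple $(Y,Z,\psi,H,f)$ lies in the same family $F_Z$ with $f<e$, so the constant $J_Z(f)$ is independent of $G$.

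\textbf{Centralising $A$ inside $K$.} Your Weyl-group argument treats the conjugation action of $K$ on $A\leqq H_\eta$ as taking place inside the algebraic group $\Aut(Y_{\bar\eta})$. It does not: elements of $K$ move the base point, so they act on the generic fibre only as automorphisms of the $\mathbb{Q}$-scheme $Y_\eta$, i.e.\ \emph{semilinearly} over $K(S)$. The bound $|N(T)/T|\leqq|W|$ controls conjugation by $K(S)$-points of the algebraic group, not this Galois-twisted action, so fixing the torus does not force $A$ to be central. The paper's substitute is to embed all of $G$ into $\KL(n,K(Y))=\GL(n,K(Y))\rtimes\Aut K(Y)$ via Proposition~\ref{Fano} and then prove a tailored statement about such semilinear groups (Theorem~\ref{groupmain}): diagonalise $A$, pass to the bounded-index subgroup preserving each eigenspace, and observe that on each eigenspace $A$ acts by a root-of-unity scalar while the image of $G$ in $\Aut K(Y)$ fixes all roots of unity (because $K(Y)\supset\mathbb{C}$). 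That last fact is what actually makes $A$ central, and it has no counterpart in your sketch. The paper also needs a uniform bound on the number of generators of every subgroup of $G$ (Theorems~\ref{bfsg} and~\ref{bgrc}, fed into Lemma~\ref{DN}) to push the nilpotency class from $c+2$ down to $c+1$; without this extra ingredient the induction would only yield class at most $d+1$.
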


\begin{rem}
\label{C}
It is enough to prove the theorem over the field of the complex numbers. Indeed, let $K$ be a field of characteristic zero and $X$ be a variety over $K$. 
We can fix a finitely generated field extension $L_0|\mathbb{Q}$ and an $L_0$-variety $X_0$ such that $X\cong X_0\times_{L_0}\Spec K$. 
Fix a field embedding $L_0\hookrightarrow\mathbb{C}$ and let $X^*\cong X_0\times_{L_0}\Spec\mathbb{C}$.
For an arbitrary finite subgroup $G\leqq\Bir(X)$ we can find a finitely generated field extension $L_1|L_0$ such that the elements of $G$ can be defined as birational transformations over the field $L_1$. Hence
 $G\leqq \Bir(X_1)$, where $X_1\cong X_0\times_{L_0}\Spec L_1$. 
We can extend the fixed field embedding $L_0\hookrightarrow\mathbb{C}$ to a field embedding $L_1\hookrightarrow\mathbb{C}$.
Therefore $X^*\cong X_0\times_{L_0}\Spec\mathbb{C}\cong X_1\times_{L_1}\Spec\mathbb{C}$, and we can embed $G$ to the birational automorphism group of the complex variety $X^*$.
As the birational class of the complex variety $X^*$ only depends on the birational class of the variety $X$, it is enough to examine complex varieties.
\end{rem}
 
In the following discussion we shortly sketch the history of Jordan type properties in birational geometry over fields of \textit{characteristic zero}. 
Research about investigating the Jordan property of the birational automorphism group of a variety was initiated by J.-P. Serre (\cite{Se09}) and V. L. Popov (\cite{Po11}).
 In \cite{Se09} J.-P. Serre settled the problem for the Cremona group of rank two (by showing that it enjoys the Jordan property), 
while  in the articles \cite{Po11}, \cite{Za15} V. L. Popov and Yu. G. Zarhin solved the question for one and two dimensional varieties.
They found that the birational automorphism group of a curve or a surface is Jordan, save when the variety is birational to a direct product of an elliptic curve and the projective line. 
This later case was examined in \cite{Za15}, where -based on calculations of D. Mumford- the author was able to conclude that the birational automorphism group contains Heisenberg $p$-groups for arbitrarily large prime numbers $p$. 
Hence it does not enjoy the Jordan property.\\ 
In \cite{PS14} and \cite{PS16} Yu. Prokhorov and C. Shramov made important contributions to the subject using the arsenal of the Minimal Model Program and assuming the Borisov-Alexeev-Borisov (BAB) conjecture 
(which has later been verified in the celebrated article \cite{Bi16} of C. Birkar;  for a survey paper on the work of C. Birkar and its connection to the Jordan property, the interested reader can consult with \cite{Ke19}). 
Amongst many highly interesting results, Yu. Prokhorov and C. Shramov proved that the birational automorphism group of a rationally connected variety and 
the birational automorphism group of a non-uniruled variety is Jordan. To answer a question of D. Allcock, they also introduced the concept of solvably Jordan groups, 
and showed that the birational automorphism group of an arbitrary variety is solvably Jordan.\\
The landscape is strikingly similar in differential geometry. The techniques are fairly different, still the results converge to similar directions. 
In the following we briefly review the history of the question of Jordan type properties of diffeomorphism groups of smooth compact real manifolds. (We note that there are many other interesting setups which were considered by differential geometers; 
for a very detailed account see the Introduction of \cite{MR18}.) As mentioned in \cite{MR18}, during the mid-nineties  \'E. Ghys conjectured that the diffeomorphism group of a smooth compact real manifold is Jordan, 
and he proposed this problem in many of his talks (\cite{Gh97}). The case of surfaces follows from the Riemann-Hurwitz formula (see \cite{MR10}), the case of 3-folds are more involved. 
In \cite{Zi14} B. P. Zimmermann proved the conjecture for them using the geometrization of compact 3-folds (which follows from the work of W. P. Thurston and G. Perelman). 
I. Mundet i Riera also verified the conjecture for several interesting cases, like tori, projective spaces, homology spheres and manifolds with non-zero Euler characteristic (\cite{MR10},\cite{MR16}, \cite{MR18}).\\ 
However, in 2014, B. Csik\'os, L. Pyber and E. Szab\'o found a counterexample (\cite{CPS14}). 
Their construction was remarkably analogous to the one of Yu. G. Zarhin. They showed that if the manifold $M$ is diffeomorphic to the direct product of the two-sphere and the two-torus 
or to the total space of any other smooth orientable two-sphere bundle over the two-torus, then the diffeomorphism group contains Heisenberg $p$-groups for arbitrary large prime numbers $p$. Hence $\Diff(M)$ cannot be Jordan.
As a consequence, \'E. Ghys improved on his previous conjecture, and proposed the problem of showing that the diffeomorphism group of a compact real manifold is nilpotently Jordan (\cite{Gh15}). 
As the first trace of evidence,  I. Mundet i Riera and  C. Sa\'ez-Calvo showed that the diffeomorphism group of a 4-fold is nilpotently Jordan of class at most 2 (\cite{MRSC19}). Their proof uses results from the classification theorem of finite simple groups.\\
Motivated by these antecedents, in this article we investigate the nilpotently Jordan property for birational automorphism groups of varieties.\\

The idea of the proof stems from the following picture. Let $X$ be a $d$ dimensional complex variety. We can assume that $X$ is smooth and projective. Let $G\leqq\Bir(X)$ be an arbitrary finite subgroup.  
Consider the MRC (maximally rationally connected) fibration $\phi:X\dashrightarrow Z$ (Theorem \ref{MRC}). Because of the functoriality of the MRC fibration, a birational $G$-action is induced on $Z$, making $\phi$ $G$-equivariant. 
After a smooth regularization (Lemma \ref{reg}) we can assume that both $X$ and $Z$ are smooth and projective, $G$ acts on them by regular automorphisms  and  $\phi$ is a $G$-equivariant morphism. 
Since the general fibres of $\phi$ are rationally connected, we can run a $G$-equivariant relative Minimal Model Program over $Z$ on $X$ (Theorem \ref{MMP}). It results a $G$-equivariant Mori fibre space $\varrho:W\to Y$ over $Z$.
 \[
\xymatrix{
X\ar@ {-->} [r]^{\cong} \ar[rd]_{\phi} & W \ar[r]^\varrho \ar[d] & Y \ar[ld]^\psi\\
& Z
}
\]
We can understand the $G$-action on $X$ by analyzing the $G$-actions on $\psi:Y\to Z$ and on $\varrho:W\to Y$. 
We will apply induction on the relative dimension $e=\dim X-\dim Z$ to achieve this (Theorem \ref{AlmostMain}). 
Actually, we will prove a slightly stronger theorem then Theorem \ref{main} and will show that $\Bir(X)$ is nilpotently Jordan of class at most $(e+1)$.
The base of the induction is when $e=0$. Then $X$ is non-uniruled and a theorem of Yu. Prokhorov and C. Shramov (Theorem 1.8 in \cite{PS14}) shows us that the birational automorphism group of $X$ is Jordan.\\
Otherwise, the inductive hypothesis will show us that $H=\Imag(G\to\Aut_{\mathbb{C}}(Y))$ has a bounded index nilpotent subgroup of class at most $e$.
To perform the inductive step, we will take a closer look at the $G$-action on the generic fibre $W_\eta\to\Spec K(Y)$. We will use two key ingredients. 
The first one is based on the boundedness of Fano varieties, and will allow us to embed $G$ into the semilinear group $\GL(n, K(Y))\rtimes\Aut_{\mathbb{C}}(K(Y))$, where $n$ is bounded in terms of $e$ (Proposition \ref{Fano}). 
The second one is a Jordan type theorem on certain finite subgroups of a semilinear group (Theorem \ref{groupmain}).
Putting these together will finish the proof.\\  

 The article is organized in the following way. In Section \ref{P} we recall the definition and some basic facts about nilpotent groups, we also recall the concept of the MRC fibration. 
 In Section \ref{FGV} we collect results about finite birational group actions on varieties. 
 In particular, it contains the theorem of Yu. Prokhorov and C. Shramov about the Jordan property of the birational automorphism group of non-uniruled and rationally connected varieties (Theorem \ref{nu}), 
 the regularization lemma (Lemma \ref{reg}), the theorem on the $G$-equivariant MMP (Theorem \ref{MMP}) and
 the proposition about certain finite group actions on Fano varieties (Proposition \ref{Fano}). At the end of the section we investigate some questions about bounds on the number of generators of finite subgroups of the birational automorphism group.
 The boundedness of the generating set helps us to give a more accurate bound on the nilpotency class (Remark \ref{NoB}).
 Section \ref{gp} deals with the proof of the Jordan type theorem on semilinear groups (Theorem \ref{groupmain}). 
 Finally, in Section \ref{PMT} we prove our main theorem.
 
\subsection*{Acknowledgements}
The author is very grateful to E. Szab\'o for many helpful discussions.

\section{Preliminaries}
\label{P}
\subsection{Nilpotent groups}
We recall the definition of nilpotent groups and some of their basic properties.

\begin{defn}
Let $G$ be a group.
Let $\Z_0(G)=1$ and define $\Z_{i+1}(G)$ as the preimage of $\Z(G/\Z_i(G))$ under the natural quotient group homomorphism $G\to G/\Z_i(G)$ $(i\in\mathbb{N})$. The series of groups
$1=\Z_0(G)\leqq\Z_1(G)\leqq\Z_2(G)\leqq...$ is called the upper central series of $G$.\\
Let $\gamma_0(G)=G$ and let $\gamma_{i+1}(G)=[\gamma_i(G),G]$ ($i\in\mathbb{N}$, and $[,]$ denotes the commutator operation). The series of groups
$G=\gamma_0(G)\geqq\gamma_1(G)\geqq\gamma_2(G)\geqq...$ is called the lower central series of $G$.\\
$G$ is called nilpotent if one (hence both) of the following equivalent conditions hold:
\begin{itemize}
\item
There exists $n\in\mathbb{N}$ such that $\Z_n(G)=G$.
\item
There exists $n\in\mathbb{N}$ such that $\gamma_n(G)=1$.
\end{itemize} 
If $G$ is a nontrivial nilpotent group, then there exists a natural number $c$ for which  $\Z_c(G)=G$, $\Z_{c-1}(G)\neq G$ and  $\gamma_c(G)=1$, $\gamma_{c-1}(G)\neq 1$ holds. $c$ is called the nilpotency class of $G$. 
(If  $G$ is trivial, then its nilpotency class is zero.)
\end{defn}

\begin{rem}
Note that $\Z_1(G)$ is the centre of the group $G$, while $\gamma_1(G)$ is the commutator subgroup. A non-trivial group $G$ is nilpotent of class one if and only if it is Abelian.\\
Nilpotency is the property between the Abelian and the solvable properties. The Abelian property implies nilpotency, while nilpotency implies solvability.
\end{rem}

The following proposition describes one of the key features of nilpotent groups. They can be built up by successive central extensions.
\begin{prop}
\label{CE}
Let $G$ be a group and $A\leqq\Z(G)$ be a central subgroup of $G$. If $G/A$ is nilpotent of class at most $c$, then $G$ is nilpotent of class at most $(c+1)$.
\end{prop}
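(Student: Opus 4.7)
The plan is to use the lower central series characterization of nilpotency, which is the cleanest route since $A \leqq \Z(G)$ makes commutators with $A$ vanish.

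First I would observe that the quotient homomorphism $\pi: G \to G/A$ is compatible with the lower central series construction: since $\pi$ is surjective, a straightforward induction on $i$ yields $\pi(\gamma_i(G)) = \gamma_i(G/A)$ for every $i \in \mathbb{N}$ (the base case $\gamma_0 = G$ is just surjectivity, and the induction step uses $\pi([x,y]) = [\pi(x),\pi(y)]$ together with surjectivity to identify the subgroup generated by commutators).

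Next, applying the hypothesis that $G/A$ is nilpotent of class at most $c$, we have $\gamma_c(G/A) = 1$, so $\pi(\gamma_c(G)) = 1$, which means $\gamma_c(G) \leqq \Ker(\pi) = A$. Since $A \leqq \Z(G)$ by assumption, every element of $\gamma_c(G)$ commutes with every element of $G$, giving
\[
\gamma_{c+1}(G) = [\gamma_c(G), G] \leqq [A, G] = 1.
\]
Hence $G$ is nilpotent of class at most $c+1$, as required.

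There is no real obstacle here; the only subtlety is confirming that the lower central series is preserved by surjective homomorphisms, which reduces to the elementary fact that $\pi$ intertwines the commutator bracket. The argument also shows the slightly sharper statement that the class of $G$ is at most the class of $G/A$ plus one, with equality possible (for instance when $G$ itself is non-Abelian and $A$ is chosen small enough), but the stated bound is all that is needed for the applications later in the paper.
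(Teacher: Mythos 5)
Your proof is correct and is the standard argument; the paper states Proposition \ref{CE} without proof, and your lower central series computation ($\gamma_c(G)\leqq A\leqq\Z(G)$, hence $\gamma_{c+1}(G)=[\gamma_c(G),G]=1$) is exactly the expected justification, consistent with the paper's indexing convention for the lower central series.
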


We will use also the two properties below about nilpotent groups.
\begin{prop}
\label{ICmap}
Let $G$ be a nilpotent group of class at most $n$. 
Fix $n-1$ arbitrary elements in $G$, denote them by $g_1,g_2,...g_{n-1}$, and let $1\leqq j \leqq n$ be an arbitrary integer.
The map $\varphi_j$ defined by the help of iterated commutators of length $(n-1)$
\begin{gather*}
\varphi_j:G\to\gamma_{n-1}(G)\\
g\mapsto [[...[[[[...[[g_1,g_2],g_3]...],g_{j-1}],g],g_j]...],g_{n-1}]
\end{gather*} 
gives a group homomorphism.
\end{prop}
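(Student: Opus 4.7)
The plan is to prove the following stronger universal congruence, valid in an arbitrary group $G$: for any choice of $g_1,\dots,g_{n-1}\in G$ and any $1\le j\le n$,
\[
\varphi_j(gh)\equiv \varphi_j(g)\,\varphi_j(h)\pmod{\gamma_n(G)}.
\]
The proposition itself is then the specialisation to groups of nilpotency class at most $n$, in which $\gamma_n(G)=1$. Throughout, I would rely on the standard commutator identities
\[
[a,bc]=[a,c]\cdot [a,b]\cdot [[a,b],c], \qquad [ab,c]=[a,c]\cdot [[a,c],b]\cdot [b,c],
\]
the conjugation formula $a^{b}=a\cdot [a,b]$, and the general inclusion $[\gamma_a(G),\gamma_b(G)]\subseteq \gamma_{a+b+1}(G)$. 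Note also that in any class-$n$ group $\gamma_{n-1}(G)\subseteq \Z(G)$, so the two factors on the right-hand side automatically commute and the order of the product $\varphi_j(g)\varphi_j(h)$ is immaterial.

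The argument then proceeds by induction on $n$; the case $n=1$ is trivial, since $\varphi_1$ reduces to the identity map. For the inductive step I would split on the position $j$. When $j=n$, set $u=[[\dots[g_1,g_2],\dots],g_{n-1}]\in \gamma_{n-2}(G)$, so that $\varphi_n(g)=[u,g]$; a single application of the first identity gives
\[
\varphi_n(gh)=[u,h]\cdot [u,g]\cdot [[u,g],h],
\]
whose last factor lies in $\gamma_n(G)$, and modulo $\gamma_n(G)$ the remaining two factors commute to yield $\varphi_n(g)\varphi_n(h)$. When $j<n$, let $\tilde\varphi_j(x)$ denote the analogous map built from $g_1,\dots,g_{n-2}$ only, so that $\varphi_j(x)=[\tilde\varphi_j(x),g_{n-1}]$. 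By the inductive hypothesis one has $\tilde\varphi_j(gh)=\tilde\varphi_j(g)\tilde\varphi_j(h)\cdot z$ for some $z\in \gamma_{n-1}(G)$, and two successive expansions of $[\tilde\varphi_j(g)\tilde\varphi_j(h)\cdot z,g_{n-1}]$ via the commutator identities produce the desired main term $[\tilde\varphi_j(g),g_{n-1}]\cdot [\tilde\varphi_j(h),g_{n-1}]=\varphi_j(g)\varphi_j(h)$ together with a handful of error factors of the form $[z,g_{n-1}]$ and of the form $[\,\cdot\,,z]$, $[\,\cdot\,,\tilde\varphi_j(h)]$ arising from the conjugation rule. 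Each such error is a commutator between elements of $\gamma_{n-1}$, $\gamma_{n-2}$ or $\gamma_0$, so the inclusion $[\gamma_a,\gamma_b]\subseteq \gamma_{a+b+1}$ places it in $\gamma_n(G)$.

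The main obstacle I anticipate is precisely the bookkeeping in this second subcase: every cross term mixing $g$ and $h$, and every correction coming from the element $z$ supplied by the inductive hypothesis, must be shown to acquire enough commutator depth to vanish modulo $\gamma_n(G)$. This is tedious but routine once the inclusion $[\gamma_a,\gamma_b]\subseteq \gamma_{a+b+1}$ is invoked case by case, after which both subcases of the induction close and the congruence upgrades to the equality claimed in the proposition.
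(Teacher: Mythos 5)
The paper states Proposition \ref{ICmap} in its preliminaries without proof, treating it as a standard fact about the lower central series, so there is no ``paper's proof'' to compare against; your job is therefore to supply a complete argument, and what you propose does that correctly. Your strengthening to the universal congruence $\varphi_j(gh)\equiv\varphi_j(g)\varphi_j(h)\pmod{\gamma_n(G)}$ in an arbitrary group is exactly the right move --- it is what makes the induction on $n$ close in the case $j<n$, where the ambient group still has class $n$ rather than $n-1$ --- and it is the standard route to the multilinearity of the weight-$n$ commutator map modulo $\gamma_n$. I checked the bookkeeping you defer: in the case $j=n$ the error terms are $[[u,g],h]\in[\gamma_{n-1},\gamma_0]\subseteq\gamma_n$ and the commutator $[[u,h],[u,g]]\in[\gamma_{n-1},\gamma_{n-1}]\subseteq\gamma_{2n-1}\subseteq\gamma_n$ needed to reorder the two factors; in the case $j<n$ the errors are $[z,g_{n-1}]\in[\gamma_{n-1},\gamma_0]$, the conjugation correction $[w,z]\in[\gamma_{n-1},\gamma_{n-1}]$, and $[[\tilde\varphi_j(g),g_{n-1}],\tilde\varphi_j(h)]\in[\gamma_{n-1},\gamma_{n-2}]\subseteq\gamma_{2n-2}\subseteq\gamma_n$ (using $n\geqq 2$ in the inductive step). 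All land in the normal subgroup $\gamma_n(G)$ and may be discarded in the quotient, so the argument is sound as written; only the explicit listing of these error terms would need to be added to turn the plan into a full proof.
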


\begin{prop}
\label{IC}
Let $G$ be a group. $G$ is nilpotent of class at most $n$ if and only if $\forall g_1,g_2,...,g_{n+1}\in G$: $[[...[[g_1,g_2],g_3]...],g_{n+1}]=1$.
\end{prop}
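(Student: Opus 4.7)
The plan is to prove both directions by induction on $n$, using the two equivalent definitions of nilpotency recorded above.

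For the \emph{only if} direction, I would show by induction on $k$ that every left-normed iterated commutator $[[\ldots[[g_1,g_2],g_3]\ldots],g_{k+1}]$ of length $k+1$ lies in $\gamma_k(G)$. The base case $k=1$ is the definition $[g_1,g_2]\in[G,G]=\gamma_1(G)$. For the inductive step, if $c=[[\ldots[g_1,g_2]\ldots],g_k]\in\gamma_{k-1}(G)$, then $[c,g_{k+1}]\in[\gamma_{k-1}(G),G]=\gamma_k(G)$. Specializing to $k=n$ and using $\gamma_n(G)=1$ finishes this half.

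For the \emph{if} direction I would induct on $n$. The base case $n=1$ is immediate: the hypothesis that $[g_1,g_2]=1$ for all $g_1,g_2\in G$ just says $G$ is Abelian, i.e.\ nilpotent of class at most $1$. For the inductive step, assume the statement holds for $n-1$ and that every left-normed commutator of length $n+1$ in $G$ is trivial. Then for any $g_1,\ldots,g_n\in G$, the element $c=[[\ldots[g_1,g_2]\ldots],g_n]$ satisfies $[c,g_{n+1}]=1$ for every $g_{n+1}\in G$, so $c\in\Z(G)$. Passing to the quotient $\bar G=G/\Z(G)$, this says that every left-normed commutator of length $n$ in $\bar G$ is trivial. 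By the inductive hypothesis, $\bar G$ is nilpotent of class at most $n-1$, i.e.\ $\Z_{n-1}(\bar G)=\bar G$. By the recursive definition of the upper central series this is equivalent to $\Z_n(G)=G$, so $G$ is nilpotent of class at most $n$.

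The argument is essentially formal; the only conceptual point — and the step I would double-check most carefully — is the compatibility between the two definitions of the nilpotency class, specifically that ``$\Z_n(G)=G$'' in the recursive sense really does follow from ``$\bar G/\Z(G)$ has upper central series of length $\leq n-1$'', since it is exactly this compatibility that allows the induction on $G/\Z(G)$ to convert the hypothesis on $(n+1)$-fold commutators into a statement one step shorter. Apart from that, no identities beyond the definitions of the upper and lower central series are required.
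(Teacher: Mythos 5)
Your proof is correct. Note that the paper states Proposition \ref{IC} without proof, treating it as a standard fact about nilpotent groups, so there is no argument in the text to compare against. Both directions of your argument are sound: the descent of left-normed commutators of length $k+1$ into $\gamma_k(G)$ is exactly the right induction for the forward implication, and the converse correctly reduces to $G/\Z(G)$ via the observation that all $n$-fold left-normed commutators become central. The one point you flag --- that $\Z_{n-1}(G/\Z(G))=G/\Z(G)$ forces $\Z_n(G)=G$ --- does hold, by the standard identity $\Z_i(G/\Z(G))=\Z_{i+1}(G)/\Z(G)$, proved by induction on $i$ using the third isomorphism theorem; spelling that out would make the proof fully self-contained.
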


\begin{rem}
Typical examples of nilpotent groups are finite $p$-groups (where $p$ is a prime number). If we restrict our attention to finite nilpotent groups, even more can be said. 
(Recall that a $p$-Sylow subgroup of a finite group is the largest $p$-group contained in the group.)
A finite group is nilpotent if and only if it is the direct product of its Sylow subgroups (Theorem 6.12 in \cite{CR62}).
\end{rem}

\subsection{The maximally rationally connected fibration}

We recall the concept of the maximally rationally connected fibration. For a detailed treatment see Chapter $4$ of \cite{Ko96}, for the non-uniruledness of the basis see Corollary 1.4 in \cite{GHS03}.

\begin{thm}
\label{MRC}
Let $X$ be a smooth proper complex variety. The pair $(Z,\phi)$ is called the maximally rationally connected (MRC) fibration if 
\begin{itemize}
\item $Z$ is a complex variety,
\item $\phi:X\dashrightarrow Z$ is a dominant rational map,
\item there exist open subvarieties $X_0$ of $X$ and $Z_0$ of $Z$ such that $\phi$ descends to a proper morphism between them $\phi_0:X_0\to Z_0$ with rationally connected fibres,
\item if $(W,\psi)$ is another pair satisfying the three properties above, then $\phi$ can be factorized through $\psi$. More precisely, there exists a rational map $\tau: W\dashrightarrow Z$ such that $\phi=\tau\circ\psi$. 
\end{itemize}
The MRC fibration exists and is unique up to birational equivalence. Moreover the basis $Z$ is non-uniruled.
\end{thm}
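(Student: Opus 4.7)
The plan is to construct $\phi:X \dashrightarrow Z$ as the quotient of $X$ by the equivalence relation generated by rational curves, to derive uniqueness from the universal property this quotient enjoys, and to deduce non-uniruledness of $Z$ from the theorem of Graber, Harris and Starr.

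For the construction, I would follow the approach of Campana and Koll\'ar--Miyaoka--Mori. Working with the Chow scheme parametrising rational curves on $X$, one produces families of rational curves covering increasingly large subvarieties of $X$. Declaring two points of a suitable open $X_0 \subseteq X$ to be equivalent whenever they can be joined by a connected chain of rational curves yields a proper equivalence relation; a standard theorem on quotients by proper equivalence relations represents it by a proper morphism $\phi_0 : X_0 \to Z_0$ whose fibres are rationally chain connected. Because $X$ is smooth, a general fibre is smooth and proper, and for smooth proper varieties rational chain connectedness upgrades to rational connectedness via smoothing of chains of rational curves. This produces the triple $(Z,\phi,\phi_0)$ required by the definition.

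Uniqueness is formal from a universal property. If $(W,\psi)$ is another pair satisfying the listed properties, then any two points lying in the same general fibre of $\psi$ are joined by a chain of rational curves inside $X$ (since that fibre is rationally connected), and hence lie in the same fibre of $\phi$ by construction. Therefore $\phi$ is constant along the fibres of $\psi$, and a descent argument furnishes a rational map $\tau : W \dashrightarrow Z$ with $\phi = \tau \circ \psi$. Applying this to two candidate MRC fibrations in both directions yields a birational equivalence between their bases, so $\phi$ is unique up to birational equivalence.

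The step that I expect to be the main obstacle is non-uniruledness of $Z$; this is the only non-formal part and is where the theorem of Graber--Harris--Starr \cite{GHS03} becomes essential. Assume for contradiction that $Z$ is uniruled, so that through a general point $z \in Z$ there passes a rational curve $C \subseteq Z$. Pulling back $\phi$ over $C$ and resolving singularities, one obtains a morphism $X_C \to C$ whose generic fibre agrees with a general fibre of $\phi$ and is therefore rationally connected. The Graber--Harris--Starr theorem then produces a section of $X_C \to C$; concatenating this section with rational curves inside each fibre gives rational chains in $X$ that join points lying over distinct points of $C$. As $C$ varies through a covering family of rational curves in $Z$, we obtain rational chain connections between general points of $X$ that lie in distinct fibres of $\phi$, violating the maximality built into the construction of $\phi$. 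This contradiction forces $Z$ to be non-uniruled.
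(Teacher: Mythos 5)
The paper offers no proof of this statement: it is recalled verbatim from Chapter 4 of \cite{Ko96}, with the non-uniruledness of the base attributed to Corollary 1.4 of \cite{GHS03}, and your sketch reproduces exactly that standard argument (the Campana/Koll\'ar--Miyaoka--Mori quotient by the rational-curve equivalence relation, formal uniqueness from the universal property, and the Graber--Harris--Starr section theorem to rule out a uniruled base). Your outline is correct and takes essentially the same route as the sources the paper cites.
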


\section{Finite group actions on varieties}
\label{FGV}

In this section we introduce techniques which help us to solve partial cases of our problem and help us to build up the full solution from the special cases.\\

\subsection{Jordan property}

Yu. Prokhorov and C. Shramov proved the following theorem (Theorem 1.8 in \cite{PS14}  and Theorem 1.8 in \cite{PS16}). 
It will serve us as a starting point of an inductive argument in the proof of our main theorem and 
will be an important ingredient when we look for bounds on the number of generators of finite subgroups of the birational automorphism group (Theorem \ref{bfsg}).

\begin{thm}
\label{nu}
Let $X$ be variety over a field of characteristic zero. Assume that $X$ is either non-uniruled or rationally connected. Then the birational automorphism group of $X$ is Jordan (in other words, it is nilpotently Jordan of class at most 1). 
\end{thm}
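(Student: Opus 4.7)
My plan is to treat the two cases separately; both reductions begin with a regularization step (Lemma \ref{reg}), which passes from a finite subgroup $G\leq\Bir(X)$ to a smooth projective model $Y$ on which $G$ acts by regular automorphisms. After this step the question becomes one about finite subgroups of $\Aut(Y)$.

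In the non-uniruled case, I would run MMP to produce a minimal model $Y_{\min}$ and exploit that on a non-uniruled variety the pluricanonical representation of $\Bir(Y_{\min})$ has a kernel that contains a bounded-index subgroup of every finite subgroup, reducing the problem to the Jordan property of $\Aut(Y_{\min})$. The latter fits into
\[ 1 \to \Aut^0(Y_{\min}) \to \Aut(Y_{\min}) \to \Aut(Y_{\min})/\Aut^0(Y_{\min}) \to 1, \]
where $\Aut^0(Y_{\min})$ is an abelian variety (non-uniruledness again), whose finite subgroups are therefore abelian, and the component group acts faithfully on the N\'eron--Severi lattice, so Minkowski's theorem on $\GL_n(\mathbb{Z})$ bounds its finite subgroups uniformly in terms of the Picard number. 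Combining the two bounds produces an abelian subgroup of bounded index in $G$.

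For the rationally connected case I would induct on $d=\dim X$, running a $G$-equivariant MMP (Theorem \ref{MMP}) to obtain a $G$-equivariant Mori fibre space $f:Y'\to Z$. If $\dim Z=0$, then $Y'$ is a Fano variety with terminal singularities, and by the BAB conjecture, now a theorem of Birkar, such $Y'$ lie in a bounded family; hence $\Aut(Y')$ embeds into some $\PGL_N$ with $N$ depending only on $d$, and Jordan's classical theorem finishes the argument. If $\dim Z>0$, then $Z$ is itself rationally connected of strictly smaller dimension, so the inductive hypothesis applied to the image $\bar G\leq\Bir(Z)$ handles the base; the kernel acts on the generic fibre $Y'_\eta$, which is a Fano variety over $K(Z)$, and boundedness of Fano varieties again allows one to embed it into a linear group of bounded rank over $K(Z)$, from which Jordan's theorem on $\GL_N$ over a field completes the step.

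The main obstacle is the Fano step at the bottom of the induction: without the uniform boundedness of Fano varieties with terminal singularities, there is no way to force finite subgroups of $\Aut(Y')$ into a linear group of bounded rank, and Jordan's theorem cannot be invoked uniformly. Once BAB is in hand, the remaining arguments are routine combinations of short exact sequences, the above induction, and the classical Jordan theorem.
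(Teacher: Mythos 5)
The paper does not prove this statement at all: it is imported verbatim from Prokhorov--Shramov (Theorem~1.8 of \cite{PS14} for the non-uniruled case and Theorem~1.8 of \cite{PS16} for the rationally connected case), so what you are really attempting is a reconstruction of those two rather substantial papers. Your reconstruction has genuine gaps in both branches. In the non-uniruled branch: (a) the existence of a minimal model $Y_{\min}$ for an arbitrary non-uniruled variety is an open problem in general (BCHM, and Theorem~\ref{MMP} of this paper, give termination only when $K_X$ is \emph{not} pseudo-effective over the base, i.e.\ in the uniruled direction), and finiteness of the pluricanonical representation presupposes $\kappa(X)\geq 0$, which for non-uniruled $X$ is the open nonvanishing conjecture; (b) even granting a smooth projective model, the regularization Lemma~\ref{reg} produces a \emph{different} model $Y_G$ for each finite subgroup $G\leq\Bir(X)$, so the Picard number $\rho(Y_G)$ and the Lieberman-type index $[\Ker(\Aut(Y_G)\to\GL(NS(Y_G))):\Aut^0(Y_G)]$ are not bounded in terms of $X$ alone, and the Minkowski bound you invoke is therefore not uniform over $G$. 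The Jordan constant must depend only on the birational class of $X$, so this is not a cosmetic issue.

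In the rationally connected branch the fatal step is the claim that the inductive hypothesis on the base $Z$ plus a Jordan bound on the kernel acting on the generic fibre ``completes the step'' by a ``routine combination of short exact sequences.'' The Jordan property is not closed under extensions: an extension of a group with a bounded-index abelian subgroup by another such group need not have a bounded-index abelian subgroup --- this is precisely the Heisenberg-group phenomenon (cf.\ $E\times\mathbb{P}^1$ in \cite{Za15}) that motivates the entire paper, and it is why the main theorem here only achieves \emph{nilpotently} Jordan of class growing with the relative dimension. The actual Prokhorov--Shramov argument for rationally connected varieties avoids the extension problem altogether by the fixed-point theorem (Theorem~\ref{afp}): a bounded-index subgroup of $G$ fixes a point, acts faithfully on the tangent space there, hence embeds into $\GL(d,\mathbb{C})$, and only then does the classical Jordan theorem apply. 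This is exactly the route the paper itself takes in Theorem~\ref{bgrc}, and it is the ingredient missing from your sketch; without it, your induction proves at best a solvable or nilpotent variant, not the Jordan property.
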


\subsection{Smooth regularization}

The next lemma is a slight extension of the well-known (smooth) regularization of finite group actions on varieties  (Lemma-Definition 3.1. in\cite{PS14}).

\begin{lem}
\label{reg}
Let $X$ and $Z$ be complex varieties and $\phi:X\dashrightarrow Z$ be a dominant rational map between them. Let $G$ be a finite group which acts by birational automorphisms on $X$ and $Z$ in such a way that $\phi$ is $G$-equivariant. 
There exist smooth projective varieties 
$X^*$ and $Z^*$ with regular $G$-actions on them and a $G$-equivariant projective morphism $\phi^*: X^*\to Z^*$ such that 
$X^*$ is $G$-equivariantly birational to $X$, $Z^*$ is $G$-equivariantly birational to $Z$ and $\phi^*$ is $G$-equivariantly birational to $\phi$. In other words, we have a $G$-equivariant commutative diagram.
\[
\xymatrix{
X \ar@{-->}[r]^\cong \ar@{-->}[d]^\phi & X^* \ar[d]^{\phi^*}\\
Z \ar@{-->}[r]^\cong & Z^*
}
\]

\end{lem}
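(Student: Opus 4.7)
The plan is to bootstrap from the single-variety regularization (Lemma-Definition 3.1 of \cite{PS14}) by handling $Z$ first and then pulling the map back and compactifying the graph.

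First I would apply the standard regularization to $Z$ alone. This produces a smooth projective complex variety $Z^*$ with a regular $G$-action, together with a $G$-equivariant birational map $Z \dashrightarrow Z^*$. Composing with $\phi$, we obtain a $G$-equivariant dominant rational map $\tilde{\phi}: X \dashrightarrow Z^*$. Since $G$ is finite, by intersecting the domains of definition of the finitely many birational self-maps $g: X \dashrightarrow X$ and of the rational map $\tilde\phi$ (together with their $G$-translates), we can find a dense $G$-invariant open subset $U \subseteq X$ on which $G$ acts by regular automorphisms and on which $\tilde\phi$ restricts to a genuine morphism $U \to Z^*$.

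Next I would produce a projective $G$-equivariant model over $Z^*$. Choose any $G$-equivariant projective compactification $\bar{U}$ of $U$ (existence is standard for actions of finite groups on quasi-projective varieties), and form the closure $X_1$ of the graph of $\tilde\phi|_U$ inside $\bar{U} \times Z^*$. The diagonal $G$-action on $\bar{U} \times Z^*$ is regular and preserves $X_1$ because $\tilde\phi$ is $G$-equivariant on $U$. The projection $\mathrm{pr}_2: X_1 \to Z^*$ is then a $G$-equivariant morphism, and it is projective because $X_1$ is closed in the projective variety $\bar{U} \times Z^*$ and $Z^*$ is projective. By construction, $X_1$ is $G$-equivariantly birational to $X$ via the first projection restricted to the graph.

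Finally I would apply equivariant resolution of singularities (Hironaka, in the $G$-equivariant form due to Bierstone–Milman and Villamayor, applicable because $G$ is finite and $\mathrm{char}\,\mathbb{C}=0$) to $X_1$. This yields a smooth projective $G$-variety $X^*$ with a $G$-equivariant projective birational morphism $X^* \to X_1$. Composing with $\mathrm{pr}_2$ produces the required $G$-equivariant projective morphism $\phi^*: X^* \to Z^*$, fitting into the diagram
\[
\xymatrix{
X \ar@{-->}[r]^{\cong} \ar@{-->}[d]_{\phi} & X^* \ar[d]^{\phi^*}\\
Z \ar@{-->}[r]^{\cong} & Z^*
}
\]
with all horizontal arrows $G$-equivariant birational equivalences.

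The routine parts are the standard regularization of $Z$ and the invocation of equivariant resolution. The only step requiring some care, and the one I view as the main technical point, is the construction of the intermediate model $X_1$: one must verify that restricting to a $G$-invariant open subset $U$ on which both the $G$-action and $\tilde\phi$ are simultaneously regular does not obstruct producing a $G$-equivariant projective compactification, and that the diagonal $G$-action on the graph closure inside $\bar{U} \times Z^*$ is indeed regular — which is ensured precisely by compactifying $U$ $G$-equivariantly first and then taking the graph closure inside the product, rather than trying to extend $\tilde\phi$ on the original $X$.
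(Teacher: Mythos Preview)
Your argument is correct, but it follows a different path from the paper's. The paper works field-theoretically: it passes to the $G$-invariant function fields $K(Z)^G\leqq K(X)^G$, chooses a projective model $X_1\to Z_1$ of that extension, and then \emph{normalizes} $X_1$ and $Z_1$ in $K(X)$ and $K(Z)$ respectively to recover projective $G$-varieties $X_2\to Z_2$; only at the end does it invoke equivariant resolution, first on the base and then (after a fibre-product replacement) on the total space. Your approach is more geometric and modular: you first black-box the single-variety regularization of $Z$, then build the model of $X$ as a graph closure inside $\bar U\times Z^*$ and resolve. The paper's normalization trick has the virtue of producing the $G$-equivariant projective models of $X$ and $Z$ in one uniform step, without needing to quote separately the existence of $G$-equivariant projective compactifications of quasi-projective $G$-varieties (which, of course, is typically proved by exactly that normalization trick). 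Your route, on the other hand, makes the reduction to the known single-variety case explicit and keeps the role of the graph construction visible; it is arguably easier to follow for a reader who already has Lemma--Definition~3.1 of \cite{PS14} in hand.
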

\begin{proof}
Let $K(Z)\leqq K(X)$ be the field extension corresponding to the function fields of $Z$ and $X$, induced by $\phi$. 
Take the induced $G$-action on this field extension and let $K(Z)^G\leqq K(X)^G$ be the field extension of the $G$-invariant elements. 
Consider a projective model of it, i.e. let $\varrho_1: X_1\to Z_1$ be a (projective) morphism, where $X_1$ and $Z_1$ are projective varieties such that $K(X_1)\cong K(X)^G$ and $K(Z_1)\cong K(Z)^G$, 
and $\varrho_1: X_1\to Z_1$ induces the field extension  $K(Z_1)\cong K(Z)^G\leqq K(X)^G\cong K(X_1)$.
By normalizing $X_1$ in the function field $K(X)$ and $Z_1$ in the function field $K(Z)$ we get projective varieties $X_2$ and $Z_2$, moreover $\varrho_1$ induces a $G$-equivariant morphism $\varrho_2:X_2\to Z_2$ between them.\\
As the next step, we can take a $G$-equivariant resolution of singularities $\widetilde{Z_2}\to Z_2$. After replacing $Z_2$ by $\widetilde{Z_2}$ 
and $X_2$ by the irreducible component of $X_2\times_{Z_2}\widetilde{Z_2}$ which dominates $\widetilde{Z_2}$, we can assume that $Z_2$ is smooth. Hence $G$-equivarianlty resolving the singularities of $X_2$ finishes the proof.
\end{proof}

\subsection{Minimal Model Program and boundedness of  Fano varieties}

Applying the results of the famous article by C. Birkar, P. Cascini, C. D. Hacon and J. McKernan (\cite{BCHM10}) enables us to use the arsenal of the Minimal Model Program. 
As a consequence, we can examine rationally connected varieties (fibres) with the help of Fano varieties (fibres). 
For the later we can use boundedness results because of yet another famous theorem  by C. Birkar (\cite{Bi16}). (This theorem was previously known as the BAB Conjecture).

\begin{thm}
\label{MMP}
Let $X$ and $Z$ be smooth projective complex varieties such that $\dim Z<\dim X$. Let $\phi:X\to Z$ be a dominant morphism between them with rationally connected general fibres. 
Let $G$ be a finite group which acts by regular automorphisms on $X$ and $Z$ in such a way that $\phi$ is $G$-equivariant. 
We can run a $G$-equivariant Minimal Model Program (MMP)
on $X$ relative to $Z$ which results a Mori fibre space. In particular, the Minimal Model Program gives a $G$-equivariant commutative diagram 
\[
\xymatrix{
X\ar@ {-->} [r]^{\cong} \ar[rd]^{\phi} & W \ar[r] \ar[d] & Y \ar[ld]\\
& Z
}
\]
where $W$ is $G$-equivariantly birational to $X$, $\dim Y< \dim X$ and the generic fibre of the morphism between $W$ and $Y$ is  a Fano variety with (at worst) terminal singularities.
\end{thm}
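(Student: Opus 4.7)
The approach is to invoke the $G$-equivariant version of the Minimal Model Program, which follows from the results of Birkar--Cascini--Hacon--McKernan \cite{BCHM10} once one is careful to work with $G$-invariant objects throughout. The outline is standard, and the three things that need to be checked are (i) that each step of the MMP can be carried out equivariantly, (ii) that the MMP terminates, and (iii) that because the general fibres of $\phi$ are rationally connected, the MMP cannot produce a minimal model and must therefore terminate with a Mori fibre space.

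For step (i), I would consider the relative Néron--Severi space $N^1(X/Z)_{\mathbb{R}}$ and the relative cone of curves $\overline{NE}(X/Z)$, both of which carry a natural $G$-action. Instead of contracting a single $K_X$-negative extremal ray, one contracts the $G$-invariant face spanned by the $G$-orbit of such a ray; equivalently, one works with $K_X + \varepsilon H$ for a $G$-invariant ample divisor $H$ (obtained by averaging an ample class over $G$) so that the extremal rays that appear in the MMP with scaling are automatically $G$-invariant. Divisorial contractions and flips then descend to the quotient category of $G$-varieties because BCHM provides them as $\Proj$ of finitely generated $G$-equivariant algebras, and $G$-equivariant termination of flips follows from the ordinary termination statement applied to the quotient pair (after taking a $G$-equivariant resolution and passing to a klt pair $(X, \Delta)$ with $\Delta$ small and $G$-invariant). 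This part is well-known and has been used, for instance, by Prokhorov--Shramov, so I would sketch it briefly and cite the appropriate references.

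For steps (ii) and (iii), the key input is the rationally connected hypothesis on the general fibre $F$ of $\phi$. Since $F$ is rationally connected (in particular uniruled), one has $\kappa(F) = -\infty$, hence $K_{X/Z}$ is not pseudo-effective relative to $Z$, and equivalently $K_X$ is not $\phi$-pseudo-effective. Running the $K_X$-MMP over $Z$ with scaling by a $G$-invariant ample divisor $H$ therefore cannot terminate with a relative minimal model; it must terminate with a divisorial Mori fibre space contraction $W \to Y$ over $Z$. By construction this morphism is $G$-equivariant, $\dim Y < \dim W = \dim X$, the relative Picard rank (in the $G$-invariant part) is one, $-K_{W/Y}$ is $\varrho$-ample, and $W$ has at worst terminal singularities because we ran a terminal MMP. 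Consequently the generic fibre $W_\eta$ is a Fano variety over $\Spec K(Y)$ with at worst terminal singularities, giving precisely the commutative diagram in the statement.

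The main obstacle is really the bookkeeping in (i): one has to argue that every contraction and flip produced by BCHM is $G$-equivariant and that the process terminates in the equivariant setting. Once this is granted, (ii) and (iii) are an immediate consequence of the non-pseudo-effectivity of $K_{X/Z}$ coming from rational connectedness of the general fibre, and there are no further subtleties.
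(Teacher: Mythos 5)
Your proposal follows essentially the same route as the paper: invoke Corollary 1.3.3 of \cite{BCHM10} to run a relative MMP over $Z$ terminating in a Mori fibre space, handle $G$-equivariance by the standard averaging/invariant-face argument with references to \cite{KM98} and \cite{PS14}, and reduce everything to showing that $K_X$ is not $\phi$-pseudo-effective. The paper's proof is exactly this, and it spends its only substantive paragraph on the last point.

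One inference in your step (ii)--(iii) is not valid as written: you pass from ``$F$ rationally connected, hence $\kappa(F)=-\infty$'' to ``$K_X$ is not $\phi$-pseudo-effective.'' The implication $\kappa(F)=-\infty\Rightarrow K_F$ not pseudo-effective is the hard (and in general conjectural) direction; what is actually true and easy is the implication from \emph{uniruledness}, which you already have. The paper argues it directly: through a general closed point $x$ of a general fibre $F$ there is a free rational curve $C_x$ contained in $F$ (Theorem 1.9 of Chapter 4 in \cite{Ko96}), and freeness gives $C_x.K_X\leqq -2$; since such curves pass through general points of general fibres, $K_X$ cannot be $\phi$-pseudo-effective. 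Replacing your Kodaira-dimension detour with this (or with a citation of the relevant direction of Boucksom--Demailly--P\u{a}un--Peternell) closes the gap; everything else in your outline matches the paper's argument.
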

\begin{proof}
By Corollary 1.3.3 of \cite{BCHM10}, we can run a relative MMP on $\phi:X\to Z$ (which results a Mori fibre space) if the canonical divisor of $X$ is not $\phi$-pseudo-effective. It can be done equivariantly if we have 
finite group actions. (See Section 2.2 in \cite{KM98} and Section 4 of \cite{PS14} for further discussions on the topic.) So, it remains to show that the canonical divisor of $X$ is not $\phi$-pseudo-effective.\\
By generic smoothness, a general fibre of $\phi$ is a smooth rationally connected projective complex variety. 
Therefore if $x$ is a general closed point of a general fibre $F$, then there exists a free rational curve $C_x$ running through $x$, lying entirely in the fibre $F$ (Theorem 1.9 of Chapter 4 in \cite{Ko96}). 
Since $C_x$ is a free rational curve, $C_x.K_X\leqq-2$. Since the inequality holds for every general closed point of every general fibre, $K_X$ cannot be $\phi$-pseudo-effective.
\end{proof}

The lemmas and the theorems above open the door for us to use induction on the relative dimension of the MRC fibration while proving Theorem \ref{main}.  So we only need to deal with Fano varieties of bounded dimensions.

\begin{prop}
\label{Fano}
Let $e$ be a natural number. There exists a constant $n=n(e)\in\mathbb{N}$, only depending on $e$, with the following property. If
\begin{itemize}
\item $K$ is a field of characteristic zero,
\item $F$ is a Fano variety over $K$ of dimension at most $e$, with terminal singula/-rities,
\item $G$ is a finite group which acts  faithfully on $F$ by regular automorphisms of the $\mathbb{Q}$-scheme $F$, 
and acts on $\Spec K$ by regular automorphisms of the $\mathbb{Q}$-scheme $\Spec K$, 
in such a way that the structure morphism $F\to\Spec K$ is $G$-equivariant,
\end{itemize}
then $G$ can be embedded into the semilinear group $\KL (n, K)\cong \GL(n, K)\rtimes \Aut K$ 
in such a way that $G\hookrightarrow \KL (n,K)\twoheadrightarrow\Aut K$ corresponds to the $G$-action on $\Spec K$. 
\end{prop}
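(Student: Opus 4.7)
The plan is to invoke the boundedness of Fano varieties of bounded dimension with terminal singularities (the BAB conjecture, proved by C. Birkar). First I would pass to the algebraic closure, setting $\bar{F}:=F\times_K\bar K$. Birkar's theorem, applied to the family of $\bar K$-Fano varieties with terminal singularities of dimension at most $e$, supplies a natural number $m=m(e)$, depending only on $e$, such that $-mK_{\bar F}$ is very ample, together with an upper bound $n=n(e)$ on $\dim_{\bar K}H^0(\bar F,\mathcal{O}(-mK_{\bar F}))$. By flat base change, both the dimension of $V:=H^0(F,\mathcal{O}(-mK_F))$ and the very ampleness of $-mK_F$ descend from $\bar F$ to $F$ itself.

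The linear system $|-mK_F|$ then yields a closed $K$-embedding $F\hookrightarrow\mathbb{P}_K(V^*)$. Because the $G$-action on $F$ is compatible with the $G$-action on $\Spec K$ via the structure morphism, the canonical class $K_F$ is preserved under $G$, so each $g\in G$ induces a $\sigma_g$-semi-linear automorphism of $V$, where $\sigma_g\in\Aut K$ denotes the action of $g$ on $\Spec K$; explicitly, $g(av)=\sigma_g(a)\cdot g(v)$ for $a\in K$ and $v\in V$. This data packages into a group homomorphism $G\to\KL(\dim V,K)$. Using the standard embedding $\KL(\dim V,K)\hookrightarrow\KL(n,K)$ given by $(A,\sigma)\mapsto(A\oplus I_{n-\dim V},\sigma)$, I obtain the desired $\Phi:G\to\KL(n,K)\cong\GL(n,K)\rtimes\Aut K$; by construction, its composition with the projection $\KL(n,K)\twoheadrightarrow\Aut K$ is exactly $g\mapsto\sigma_g$, matching the $G$-action on $\Spec K$.

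Injectivity of $\Phi$ is straightforward: if $\Phi(g)$ is the identity, then $\sigma_g=\mathrm{id}_K$, so $g$ acts trivially on $\Spec K$; simultaneously $g$ acts $K$-linearly and trivially on $V$, hence trivially on $\mathbb{P}_K(V^*)$; since the embedding $F\hookrightarrow\mathbb{P}_K(V^*)$ is $G$-equivariant, $g$ acts trivially on $F$, and faithfulness of the original action forces $g=1$.

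The main obstacle is genuinely the appeal to Birkar's BAB theorem, which is what allows $m$ and $n$ to be chosen uniformly in terms of $e$ alone. Everything else is formal: descending very ampleness and dimension counts from $\bar K$ to $K$ via flat base change, and translating a semi-linear $G$-representation on $V$ into an element of $\KL(n,K)$.
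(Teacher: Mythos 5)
Your proposal is correct and follows essentially the same route as the paper: both rest on Birkar's boundedness theorem to obtain a uniform power of the anticanonical divisor giving a closed embedding into a projective space of bounded dimension, and both then use functoriality of the (anti)canonical sheaf to produce a faithful semilinear $G$-action on the space of global sections compatible with the action on $\Spec K$. The only cosmetic difference is the descent step — you base-change to $\bar K$ and descend very ampleness by flat base change, while the paper spreads out to a finitely generated subfield and embeds it into $\mathbb{C}$ — and your extra care with injectivity and with padding to dimension exactly $n$ is harmless.
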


\begin{proof}
Fix $K$, $F$ and $G$ with the properties described by the theorem. There exists a finitely generated field extension $L_0|\mathbb{Q}$ and a  Fano variety $F_0$ over $L_0$ such that $F\cong F_0\times_{L_0}\Spec K$.
Consider an embedding of fields $L_0\hookrightarrow\mathbb{C}$, and let $F_1\cong F_0\times_{L_0}\Spec \mathbb{C}$.
Since complex Fano varieties with terminal singularities of bounded dimension form a bounded family (Theorem1.1 in\cite{Bi16}), there exist constants $P=P(e),M=M(e)\in\mathbb{N}$, only depending on $e$,
such that $P$-th power of the anticanonical divisor embeds $F_1$ to the $M_1$-dimensional complex projective space, where $M_1\leqq M$.
Since the $P$-th power of the anticanonical divisor is defined over any field, this embedding is defined over any field, in particularly over $K$. 
So we have a closed embedding of the form $F\hookrightarrow \mathbb{P}_K^{M_1}\cong\mathbb{P}(\H0 (X,-K_F^P)^*)$.\\ 
By the functorial property of a (fixed) power of the anticanonical divisor, an equivariant $G$-action is induced on the commutative diagram below.
\[
\xymatrix{
F\ar@{^{(}->}[r] \ar[d] & \mathbb{P}(\H0 (X,-K_F^P)^*) \ar[ld] \\
\Spec K 
}
\]
Since $F\hookrightarrow\mathbb{P}(\H0 (X,-K_F^P)^*)$ is a closed embedding, the semilinear action of $G$ on the vector space $\H0 (X,-K_F^P)$ is faithful. 
Hence $G$ embeds to $\KL(\H0 (X,-K_F^P))$. Clearly $G\to \Aut K$ corresponds to the $G$-action on $\Spec K$. 
As $\dim\H0 (X,-K_F^P)\leqq M(e)+1$, we finished the proof.
\end{proof}

\subsection{Bound on the number of generating elements of finite subgroups of the birational automorphism groups}

Now we turn our attention on finding bounds on the number of generating elements of finite subgroups of the birational automorphism group of varieties. 
It will be important for as when we will investigate commutator relations (Lemma \ref{DN}), and it will be crucial to have a bound on the number of the elements of a generating set of the group.\\
The next theorem and its proof are essentially due to Y. Prokhorov and C. Shramov. (We use the world essentially as they only considered the case of finite Abelian subgroups (Remark 6.9 of \cite{PS14}).)
It is also important to note that the proof of Remark 6.9 of \cite{PS14} uses the result of C. Birkar about the boundedness of Fano varieties (Theorem 1.1 in \cite{Bi16}).

\begin{thm}
\label{bfsg}
Let $X$ be a variety over a field of characteristic zero. There exists a constant $m=m(X)\in\mathbb{Z}^+$, only depending on the birational class of $X$, such that
if $G\leqq\Bir(X)$ is an arbitrary finite subgroup of the birational automorphism group, then $G$ can be generated by $m$ elements.
\end{thm}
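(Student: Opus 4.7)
The plan is to extend Prokhorov-Shramov's argument for finite abelian subgroups (Remark 6.9 of \cite{PS14}) to arbitrary finite subgroups by combining it with the Jordan property of Theorem \ref{nu}. By Remark \ref{C}, it suffices to work with a smooth projective complex variety $X$. I would proceed by induction on the relative MRC dimension $e = \dim X - \dim Z$, where $Z$ is the MRC base, proving the following strengthening: there is a constant $M(Z, e)$ such that for any smooth projective variety $V$ whose MRC base is birational to $Z$ and whose relative MRC dimension is at most $e$, every finite subgroup of $\Bir(V)$ is generated by at most $M(Z, e)$ elements.

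For the base case $e = 0$, the variety $V$ is non-uniruled, so by Theorem \ref{nu} the group $\Bir(V)$ is Jordan with some constant $J = J(V)$. Given a finite $G \leqq \Bir(V)$, a normal abelian subgroup $A \leqq G$ of index at most $J$ is produced (for instance, by intersecting the conjugates of a Jordan abelian subgroup), and the Prokhorov-Shramov bound on abelian subgroups of $\Bir(V)$ provides a number of generators for $A$ depending only on $V$. Lifting at most $\log_2 J$ generators of $G/A$ to $G$ yields the desired bound $M(Z,0)$.

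For the inductive step, let $G \leqq \Bir(X)$ be an arbitrary finite subgroup. After regularizing the MRC fibration (Lemma \ref{reg}), we may assume that $G$ acts biregularly on a $G$-equivariant morphism $\phi : X \to Z$. A $G$-equivariant relative MMP (Theorem \ref{MMP}) produces a $G$-equivariant Mori fibre space $\varrho : W \to Y$ over $Z$, with $W$ $G$-equivariantly birational to $X$ and generic fibre of $\varrho$ a Fano variety of dimension at most $e$. The restriction map yields the short exact sequence
\[
1 \to K \to G \to G_Y \to 1,
\]
where $K = \Ker(G \to \Aut(Y))$ acts by $K(Y)$-automorphisms on the generic Fano fibre. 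By Proposition \ref{Fano} applied with trivial action on the base, $K$ embeds into $\GL(n(e), K(Y))$; as $K$ is finite, the entries of its elements generate a finitely generated subfield of $K(Y)$ which embeds into $\mathbb{C}$, yielding an embedding $K \hookrightarrow \GL(n(e), \mathbb{C})$. The classical Jordan theorem combined with the fact that finite abelian subgroups of $\GL(n, \mathbb{C})$ are simultaneously diagonalizable (hence generated by $n$ elements) bounds the number of generators of $K$ in terms of $e$ alone.

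For the quotient $G_Y \leqq \Bir(Y)$, observe that the fibres of $Y \to Z$ are rationally connected since they are dominated by the rationally connected fibres of $W \to Z$; hence $Z$ is the MRC base of $Y$ up to birational equivalence and $e(Y) = \dim Y - \dim Z \leqq e - 1$. The induction hypothesis bounds $G_Y$ by $M(Z, e-1)$ generators, and combining with the bound on $K$ gives $M(Z, e)$. The main obstacle to this argument is that the MMP output $Y$ depends on $G$; this is resolved precisely by formulating the induction in terms of the pair $(Z, e)$ rather than in terms of the specific variety, so that the accumulated bound depends only on the birational class of $X$.
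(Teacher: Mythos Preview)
Your argument is correct, but it takes a longer road than the paper does. The paper's proof avoids both induction and the MMP for this lemma: it first observes that when $X$ is non-uniruled \emph{or} rationally connected, Theorem~\ref{nu} gives the Jordan property, so the Prokhorov--Shramov bound on finite \emph{abelian} subgroups (Remark~6.9 of \cite{PS14}) immediately bounds all finite subgroups in those two cases. Then for general $X$ a single short exact sequence suffices: the kernel $G_\rho$ acts faithfully on the rationally connected generic fibre $X_\rho$ of the MRC fibration, and the quotient $G_Z$ acts on the non-uniruled base $Z$, so both sides are handled by the two special cases just established. No induction, no MMP, no Proposition~\ref{Fano}.

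By contrast, you only invoke Theorem~\ref{nu} in the non-uniruled case and then recover the rationally connected direction by an inductive MMP descent that mirrors the structure of Theorem~\ref{AlmostMain}. This works---your treatment of the kernel via Proposition~\ref{Fano} and the embedding into $\GL(n(e),\mathbb{C})$ is essentially Lemma~\ref{bg}, and your observation that the induction should be on the pair $(Z,e)$ rather than on the variety correctly sidesteps the dependence of $Y$ on $G$---but it buys nothing extra here. The paper's route is shorter and keeps Theorem~\ref{bfsg} logically prior to (and independent of) the MMP machinery, which is cleaner since Theorem~\ref{bfsg} is later used as an input to the main induction.
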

\begin{proof}
First we show the theorem in the special cases when $X$ is either non-uniruled or rationally connected. 
By Remark 6.9 of \cite{PS14} and Theorem 1.1 of \cite{Bi16}, there exists a constant $m=m(X)\in\mathbb{Z}^+$, only depending on the birational class of $X$, such that
if $A\leqq\Bir(X)$ is an arbitrary finite Abelian subgroup of the birational automorphism group, then $A$ can be generated by $m$ elements. Since $\Bir(X)$ is Jordan when $X$ is non-uniruled or rationally connected (Theorem \ref{nu}),
the result on the finite Abelian groups implies the claim of the theorem in both of these special cases.\\
Now let $X$ be arbitrary. Arguing as in Remark \ref{C} we can assume that $X$ is a complex variety. 
Consider the MRC fibration $\phi:X\dashrightarrow Z$.
By Lemma \ref{reg} we can assume that both $X$ and $Z$ are smooth projective varieties, and $G$ acts on them by regular automorphisms. 
Let $\rho$ be the generic point of $Z$, and let $X_\rho$ be the generic fibre of $\phi$. $X_\rho$ is a rationally connected variety over the function field $k(Z)$.\\
Let $G_\rho\leqq G$ be the maximal subgroup of $G$ acting fibrewise. $G_\rho$ has a natural faithful action on $X_\rho$, while $G/G_\rho=G_Z$ has a natural faithful action on $Z$. 
This gives a short exact sequence of groups
\[1\to G_\rho\to G\to G_Z\to 1.\]
By the rationally connected case there exists a constant $m_1(X_{\rho})$, only depending on the birational class of $X_{\rho}$, such that $G_\rho$ can be generated by $m_1(X_{\rho})$ elements. 
By the non-uniruled case there exists a constant $m_2(Z)$, only depending on the birational class of $Z$, such that $G_Z$ can be generated by $m_2(Z)$ elements.
So $G$ can be generated by $m(X_{\rho}, Z)=m_1(X_{\rho})+m_2(Z)$ elements. Since $m(X_{\rho},Z)$ only depends on the birational classes of $X_{\rho}$ and $Z$, 
and both of the birational classes of $X_{\rho}$ and $Z$ only depend on the birational class of $X$, this finishes the proof.
\end{proof}

In case of rationally connected varieties we will use a slightly stronger version of the theorem. To prove it, we need a theorem about fixed points of rationally connected varieties. 
It is due to Yu. Prokhorov and C. Shramov (Theorem 4.2 of \cite{PS14}).

\begin{thm}
\label{afp}
Let $e$ be a natural number. There exits a constant $R=R(e)\in\mathbb{Z}^+$, only depending on $e$, with the following property.
If $X$ is a rationally connected complex projective variety of dimension at most $e$,
and $G\leqq\Aut(X)$ is an arbitrary finite subgroup of its automorphism group,
then there exists a subgroup $H\leqq G\leqq \Aut(X)$ such that $H$ has a fixed point in $X$, and the index of $H$ in $G$ is bounded by $R$.
\end{thm}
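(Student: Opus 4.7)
The plan is to prove Theorem \ref{afp} by induction on $e$; the base case $e = 0$ is trivial. For the inductive step, I would first pass to a $G$-equivariant resolution of singularities and assume $X$ is smooth, projective, and rationally connected, since fixed points descend through the equivariant resolution morphism. Next, run the $G$-equivariant relative MMP (Theorem \ref{MMP}) on the structure morphism $X\to\Spec\mathbb{C}$ to obtain a $G$-equivariantly birational Mori fibre space $X' \to Y$, where either $\dim Y=0$ or $0<\dim Y<\dim X$.

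If $0 < \dim Y < \dim X$, then $Y$ is rationally connected by the Graber--Harris--Starr theorem, so the inductive hypothesis applied to the $G$-action on $Y$ yields a subgroup $H_1\leqq G$ of index bounded in terms of $e$ fixing some point $y\in Y$; applying the inductive hypothesis once more to the $H_1$-action on the (lower-dimensional, rationally connected) fibre $X'_y$ produces a further bounded-index $H\leqq H_1$ fixing a point of $X'_y\subset X'$. If $\dim Y=0$ then $X'$ is a terminal Fano of dimension at most $e$, and Proposition \ref{Fano} provides a uniform embedding $\Aut(X')\hookrightarrow\GL(n(e),\mathbb{C})$; classical Jordan's theorem then supplies a bounded-index abelian subgroup of $G$ sitting inside $\GL(n(e),\mathbb{C})$, which is simultaneously diagonalisable and hence has a common eigenvector, i.e.\ a fixed point on the ambient $\mathbb{P}^{n(e)-1}$.

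The main obstacle is that the above procedure produces a fixed point on the birational model $X'$, whereas the theorem demands one on the original $X$. Transferring it back requires a $G$-equivariant analysis of each MMP step: the preimage of a fixed point under a divisorial contraction or flip is a $G$-invariant closed subvariety, to which one applies induction (passing to the stabiliser of an irreducible component and exploiting that exceptional loci of terminal extremal contractions are uniruled of smaller dimension) to eventually produce a fixed point on $X$. A secondary subtlety is that, in the Fano base case, the common eigenvector on $\mathbb{P}^{n(e)-1}$ need not lie on $X'$; this is handled by a further bounded-index reduction using the holomorphic Lefschetz fixed point formula on $X'$ together with Smith theory, exploiting the vanishing $H^i(X',\mathcal{O}_{X'})=0$ for $i>0$ that follows from the fact that terminal Fano varieties are rationally connected. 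The bookkeeping of the accumulated bounded indices at each iteration---which is bounded because the recursion has depth $\leqq e$ and each step loses only an index depending on $e$---is the delicate part that extracts the uniform constant $R(e)$.
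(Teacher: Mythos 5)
First, a point of comparison: the paper does not prove Theorem \ref{afp} at all --- it is imported verbatim from Prokhorov and Shramov (Theorem 4.2 of \cite{PS14}), so there is no internal argument to measure your proposal against. Your road map (equivariant regularization, $G$-equivariant MMP to a Mori fibre space, induction on the base, a separate treatment of the $G$-Fano case) does match the broad strategy of the original proof, and you correctly identify the two real difficulties. However, neither of them is actually resolved in your sketch, and at least one of the proposed fixes cannot work as stated, so this is an outline rather than a proof.

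The most serious gap is in the Fano base case. The bounded-index abelian subgroup supplied by Jordan's theorem sits inside $\Aut(X')\leqq\PGL$, and in general it does \emph{not} lift to an abelian, simultaneously diagonalisable subgroup of $\GL(n(e),\mathbb{C})$; the standard obstruction is the Heisenberg group of order $p^3$, whose central quotient $(\mathbb{Z}/p)^2\leqq\PGL(p,\mathbb{C})$ acts on $\mathbb{P}^{p-1}$ --- a smooth Fano variety with $H^{i}(\mathcal{O})=0$ for $i>0$ --- with \emph{no} fixed point at all, even though it is abelian and every single element has one. The holomorphic Lefschetz formula only produces fixed points of individual elements (cyclic groups), and Smith theory requires $\mathbb{F}_p$-acyclicity, which $\mathbb{P}^{p-1}$ fails; neither can manufacture a common fixed point here because none exists. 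What rescues the theorem is that $\dim X'\leqq e$ bounds $p$, so such obstructing subgroups have bounded order and can be absorbed into the index $R(e)$ --- but that is precisely the argument your proposal omits. (Linearising via Proposition \ref{Fano} does give an honest $\GL$-representation and hence eigenvectors, but, as you note yourself, these live on the ambient projective space and the Heisenberg example shows their failure to lie on $X'$ is not a removable technicality.) The second gap is that each time you "apply induction" --- to the special fibre $X'_y$, to an exceptional locus, or to the locus modified by a flip --- you invoke a hypothesis ("rationally connected projective variety of smaller dimension") that these objects are not known to satisfy: they are in general only rationally chain connected, possibly reducible, non-normal and of uncontrolled singularities, and "uniruled", which is what you cite for exceptional loci, is far too weak. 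Closing the induction requires a strengthened statement (about fibres of equivariant morphisms, with the vanishing $H^{i>0}(\mathcal{O})=0$ and control of singularities built into the hypotheses), which is exactly the technical core of Section 4 of \cite{PS14}.
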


\begin{thm}
\label{bgrc}
Let $e$ be a natural number. There exits a constant $m=m(e)\in\mathbb{Z}^+$, only depending on $e$, with the following property.
If $K$ is an arbitrary field of characteristic zero, $X$ is a rationally connected variety over $K$ of dimension at most $e$,
and $G\leqq\Bir(X)$ is an arbitrary finite subgroup of the birational automorphism group,
then $G$ can be generated by $m$ elements.
\end{thm}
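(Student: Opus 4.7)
The plan is to reduce to a \emph{regular} finite group action on a smooth projective complex rationally connected variety, apply Theorem \ref{afp} to extract a bounded-index subgroup with a fixed point, and then linearize at that fixed point to invoke classical bounds for finite subgroups of $\GL(e,\mathbb{C})$.

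First I would reduce to $K=\mathbb{C}$ by the base change argument of Remark \ref{C}: spread $X$ and a given finite subgroup $G\leqq\Bir(X)$ over a finitely generated extension $L_0|\mathbb{Q}$, fix an embedding $L_0\hookrightarrow\mathbb{C}$, and pass to the complex variety $X^*$ obtained by base change; $X^*$ is still rationally connected of dimension at most $e$, and $G\hookrightarrow\Bir(X^*)$. By Lemma \ref{reg} I may further assume $X^*$ is smooth projective and that $G$ acts on it by regular automorphisms. Then Theorem \ref{afp} yields a subgroup $H\leqq G$ of index at most $R(e)$ possessing a fixed point $p\in X^*$, and the derivative at this smooth point provides a linear representation $H\to \GL(T_pX^*)\cong \GL(e,\mathbb{C})$.

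I would next argue that this tangent representation is \emph{faithful}: in characteristic zero, a finite-order regular automorphism of a smooth connected variety that fixes a point and acts trivially on the tangent space there must be the identity (in local coordinates one writes $h(x)=x+g_d(x)+O(|x|^{d+1})$ with $g_d$ the first nonzero homogeneous component, and checks inductively that $h^k(x)=x+kg_d(x)+O(|x|^{d+1})$, contradicting finite order unless $h=\mathrm{id}$). Hence $H$ embeds into $\GL(e,\mathbb{C})$. By Jordan's classical theorem $H$ contains an abelian subgroup $A$ of index at most some $J_e$ depending only on $e$, and $A$, being a finite abelian subgroup of $\GL(e,\mathbb{C})$, is simultaneously diagonalizable and therefore generated by at most $e$ elements. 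Adjoining at most $J_e$ coset representatives yields a generating set for $H$ of size at most $e+J_e$, and adjoining at most $R(e)-1$ further representatives produces a generating set for $G$ of cardinality at most $e+J_e+R(e)-1$, a bound depending only on $e$. The main subtlety is the faithfulness of the tangent representation, which relies essentially on the characteristic being zero; otherwise the counting of generators is an elementary consequence of the classical structure theory of finite linear groups.
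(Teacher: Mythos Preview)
Your proof is correct and follows essentially the same route as the paper's own argument: reduce to $\mathbb{C}$ via Remark \ref{C}, regularize via Lemma \ref{reg}, pass to a bounded-index subgroup with a fixed point via Theorem \ref{afp}, linearize at the fixed point, and then bound generators of a finite subgroup of $\GL(e,\mathbb{C})$. The only differences are expository: where you supply a direct power-series argument for the faithfulness of the tangent representation and unpack Jordan's theorem plus simultaneous diagonalization explicitly, the paper simply cites Lemma~4 of \cite{Po14} for the former and its own Lemma \ref{bg} for the latter. (One cosmetic point: $\dim X^*$ is only $\leqq e$, so strictly $H\hookrightarrow\GL(e',\mathbb{C})$ with $e'\leqq e$; this of course does not affect the bound.)
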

\begin{proof}
Fix $K$, $X$ and $G$ with the properties described by the theorem.
Arguing as in the case of Remark \ref{C}, we can assume that $K$ is the field of the complex numbers.\\
Using Lemma \ref{reg}, we can assume that $X$ is smooth and projective and $G$ is a finite subgroup of the biregular automorphism group $\Aut(X)$.\\
By Theorem \ref {afp}, we can assume that $G$ has a fixed point in $X$. Denote it by $P$.\\
By Lemma 4 of \cite{Po14} $G$ acts faithfully on the tangent space of the fixed point $P$. So $G$ can be embedded to $\GL(\T_PX)$, whence $G$ can be embedded to $\GL(e,\mathbb{C})$.
Therefore the claim of the theorem follows from Lemma \ref{bg}. This finishes the proof.
\end{proof}

\section{Calculations in the general semilinear group}
\label{gp}

This section contains the group theoretic ingredient of the proof of the main theorem.

\begin{thm}
\label{groupmain}
Let $c,n$ and $m$ be positive integers. Let $F$ be the family of those finite groups $G$ which have the following properties. 
\begin{itemize}
\item
There exists a field $K$ of characteristic zero containing all roots of unity such that $G$ is a subgroup of the semilinear group $\KL(n,K)\cong \GL(n)\rtimes \Aut K$. 
\item
Every subgroup of $G$ can be generated by $m$ elements.
\item
The image of the composite group homomorphism $G\hookrightarrow \KL(n,K)\twoheadrightarrow\Aut K$, denoted by $\Gamma$, is nilpotent of class at most $c$ ($c \in\mathbb{N}$) and fixes all roots of unity.
\end{itemize}
There exists a constant $C=C(c,n,m)\in\mathbb{Z}^+$, only depending on $c,n$ and $m$, such that every finite group $G$ belonging to $F$ contains a nilpotent subgroup $H\leqq G$ with nilpotency class at most $(c+1)$ and with index at most $C$. 
\end{thm}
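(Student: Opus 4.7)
The plan is to construct the desired $H$ through four bounded-index reductions of $G$. The main obstacle is arranging for the central extension by the linear part of $G$ to contribute only one unit to the nilpotency class, not two; this is handled by an iterated-commutator homomorphism trick in the last step.

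First, I would isolate a characteristic abelian subgroup of the linear part $N=G\cap\GL(n,K)$. Since $N\trianglelefteq G$ and $G/N\cong\Gamma$, Jordan's theorem gives an abelian normal subgroup of $N$ of bounded index; because $N$ is $m$-generated, the number of such subgroups is bounded, and their $\Aut(N)$-conjugate intersection is a characteristic abelian $A\leq N$ of index bounded in terms of $n$ and $m$. Being characteristic in $N$, the subgroup $A$ is normal in $G$.

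Next, I exploit the semilinear structure. Because $K$ contains all roots of unity and $A$ is finite abelian, $V=K^n$ decomposes as $\bigoplus V_\chi$ over the distinct characters of $A$. Using that the roots of unity $\chi(A)\subseteq K^{*}$ are fixed by every $\sigma_g\in\Gamma$, the calculation $a(g\cdot v)=\chi(g^{-1}ag)(g\cdot v)$ shows the semilinear action of $G$ permutes the eigenspaces via $\chi\mapsto\chi\circ\mathrm{Ad}(g^{-1})$; since these characters separate $A$, the stabilizer of the partition is exactly $G^{*}=C_G(A)$, with index at most $n!$ in $G$. Inside $G^*$, $A\leq\Z(G^*)$. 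Setting $N^*=N\cap G^*=C_N(A)$, the quotient $N^*/A$ has bounded order, so the kernel $H$ of the conjugation map $G^*\to\Aut(N^*/A)$ has bounded index in $G^*$ and satisfies $[H,N^*]\leq A$. Moreover $H/(H\cap N)$ embeds into $\Gamma$ and is thus nilpotent of class at most $c$, giving $\gamma_c(H)\leq N^*$.

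The final and decisive step is to pass to a bounded-index subgroup $H'\leq H$ with $\gamma_c(H')\leq A$. For any fixed $g_1,\ldots,g_c\in H$, consider the map
\[
\bar\phi_{g_1,\ldots,g_c}:H\to N^*/A,\qquad g\mapsto [g_1,\ldots,g_c,g]\bmod A,
\]
where the iterated commutator is left-normed. The identity $[x,gg']=[x,g]\cdot g[x,g']g^{-1}$ applied with $x=[g_1,\ldots,g_c]$, combined with the fact that every $g\in H$ acts trivially on $N^*/A$, shows that $\bar\phi_{g_1,\ldots,g_c}$ is a homomorphism into the abelian target. Since $H$ is $m$-generated and $|N^*/A|$ is bounded, the number of distinct such homomorphisms as $(g_1,\ldots,g_c)$ ranges over $H^c$ is at most $|N^*/A|^m$. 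Let $H'$ be the intersection of all their kernels; then $[H:H']\leq |N^*/A|^{|N^*/A|^m}$ is bounded, and $[h_1,\ldots,h_{c+1}]\in A$ for all $h_1,\ldots,h_{c+1}\in H'$, so $\gamma_c(H')\leq A$. Therefore $H'/A$ is nilpotent of class at most $c$, and since $A\leq\Z(H')$, Proposition \ref{CE} yields that $H'$ is nilpotent of class at most $c+1$. Multiplying the bounded indices from the four reductions gives the required uniform bound $C(c,n,m)$.
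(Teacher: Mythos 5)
Your proposal is correct and follows essentially the same route as the paper: Jordan's theorem upgraded to a characteristic abelian subgroup $A$ of the linear part, the eigenspace decomposition of $K^n$ together with the hypothesis that $\Gamma$ fixes roots of unity to centralize $A$ in a subgroup of index at most $n!$, passage to the kernel of the conjugation action on the bounded quotient $N^*/A$, and finally the iterated-commutator homomorphism trick combined with $m$-generation to cut the nilpotency class from $c+2$ down to $c+1$ (this is exactly the paper's Lemma \ref{DN}). The only differences are cosmetic: you work in $G$ and show $\gamma_c(H')\leqq A$ directly rather than passing to $\overline{G}=G/A$, and you index the commutator homomorphisms by all $c$-tuples (bounding the number of distinct ones) rather than by tuples of generators.
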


First, we recall a slightly strengthened version of Jordan's theorem.
\begin{thm}
\label{Jor}
Let $n$ be a positive integer. There exists a constant $J=J(n)\in\mathbb{Z}^+$, only depending on $n$, 
such that if a finite group $G$ is a subgroup of a general linear group $\GL(n, K)$, where $K$ is a field of characteristic zero, then $G$ contains a characteristic Abelian subgroup $A\leqq G$ of index at most $J$. 
\end{thm}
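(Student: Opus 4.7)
The plan is to bootstrap from the classical Jordan theorem, which supplies a constant $J_0=J_0(n)$ and a \emph{normal} Abelian subgroup $A\trianglelefteq G$ with $[G:A]\leqq J_0$. This $A$ need not be characteristic, so the task is to manufacture from it a characteristic Abelian subgroup whose index is still bounded in terms of $n$ alone.

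The natural candidate is
\[
A^{\ast}\;:=\;\bigcap_{\alpha\in\Aut(G)}\alpha(A),
\]
which is manifestly characteristic, and Abelian because each $\alpha(A)$ is Abelian. So the only issue is to bound $[G:A^{\ast}]$. Observe that the $\Aut(G)$-orbit $\{\alpha(A):\alpha\in\Aut(G)\}$ consists of subgroups of index $\leqq J_0$; if this orbit has cardinality at most $N$, then $[G:A^{\ast}]\leqq J_0^{N}$. Hence it is enough to bound, uniformly in $n$, the number of subgroups of $G$ of index at most $J_0$.

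To obtain such a bound I would first bound the minimal number of generators $d(G)$ in terms of $n$. Since $A$ is a finite Abelian subgroup of $\GL(n,K)$ in characteristic zero, after base change to $\overline{K}$ it is simultaneously diagonalizable, hence embeds into $(\overline{K}^{\times})^{n}$ and is generated by at most $n$ elements. Combined with the fact that $G/A$ has order at most $J_0$ and is therefore $\lceil\log_2 J_0\rceil$-generated, this gives $d(G)\leqq d=d(n)$. Then the standard counting --- subgroups of index exactly $k$ in a $d$-generated group correspond to transitive $G$-actions on $\{1,\dots,k\}$ modulo conjugation by $S_{k-1}$, and such an action is specified by $d$ elements of $S_k$ --- bounds the number of subgroups of index $\leqq J_0$ by some $N=N(n)$, as required.

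The proof thus has two modest moving parts: invoking the classical Jordan theorem, and controlling the size of the $\Aut(G)$-orbit of $A$. Neither step is deep; the only point requiring care is that the generator bound for $G$ is uniform in $n$, which rests on the diagonalizability of finite Abelian subgroups of $\GL(n,K)$ in characteristic zero. Consequently I do not expect a genuine obstacle; the result is essentially a combinatorial finite-group repackaging of the classical statement.
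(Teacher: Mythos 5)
Your proposal is correct and follows essentially the same route as the paper: invoke the classical Jordan theorem, bound the number of generators of $G$ via diagonalizability of finite Abelian subgroups in characteristic zero, use that to bound the number of subgroups of index at most $J_0$, and intersect a suitable automorphism-invariant family of bounded-index Abelian subgroups (you take the $\Aut(G)$-orbit of one $A$, the paper takes all Abelian subgroups of minimal index --- an immaterial difference).
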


\begin{rem}
The only claim of the above theorem which does not follow immediately from Theorem 2.3 in \cite{Br11} is that we require the Abelian subgroup of bounded index $A\leqq G$ to be characteristic (i.e. invariant under all automorphisms of $G$)
instead of being normal (i.e. invariant under the inner automorphisms of $G$).
In the following we will prove some lemmas which help us to deduce the above variant of the theorem from the one which can be found in \cite{Br11}.
\end{rem}

\begin{lem}
\label{bg}
Let $n$ be a positive integer. There exists a constant $r=r(n)\in\mathbb{Z}^+$, only depending on $n$, 
such that if a finite group $G$ is a subgroup of a general linear group $\GL(n, K)$, where $K$ is a field of characteristic zero, then $G$  can be generated by $r$ elements. 
\end{lem}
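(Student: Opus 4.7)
The plan is to combine the classical (normal-subgroup) version of Jordan's theorem with the structure of finite abelian subgroups of $\GL(n,K)$ in characteristic zero. By classical Jordan's theorem, which is contained in Theorem 2.3 of \cite{Br11} (dropping the characteristic-subgroup strengthening proved later), there exists a constant $J=J(n)$ depending only on $n$ and a normal abelian subgroup $A\trianglelefteq G$ with $[G:A]\leqq J$. So it suffices to bound separately the number of generators of $A$ and of $G/A$.

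For the abelian piece $A$, I would argue as follows. Every element of $A$ has finite order, and since $K$ has characteristic zero its minimal polynomial divides $x^N-1$ for some $N$, which is separable; hence each element of $A$ is diagonalizable over $\overline{K}$. Since the elements of $A$ pairwise commute, they are simultaneously diagonalizable over $\overline{K}$, so after conjugation inside $\GL(n,\overline{K})$ the group $A$ becomes a subgroup of the diagonal torus $(\overline{K}^{\,*})^n$. As $A$ is finite, each coordinate is a root of unity, so letting $e$ be the exponent of $A$ we obtain an embedding of abstract groups
\[
A\hookrightarrow \mu_e^{\,n}\cong (\mathbb{Z}/e\mathbb{Z})^n.
\]
By the elementary divisor theorem, any subgroup of $(\mathbb{Z}/e\mathbb{Z})^n$ can be generated by $n$ elements, so $A$ itself can be generated by at most $n$ elements.

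For the finite quotient $G/A$, since $|G/A|\leqq J(n)$, an elementary counting argument shows that any finite group of order at most $J$ can be generated by at most $\lfloor\log_2 J\rfloor$ elements (each new generator at least doubles the subgroup it generates). Lifting a generating set for $G/A$ of this size to elements of $G$ and combining it with the at most $n$ generators of $A$ yields a generating set of $G$ of cardinality at most
\[
r(n)=n+\lfloor\log_2 J(n)\rfloor,
\]
which depends only on $n$, as required.

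I do not expect any serious obstacle here: the only subtlety is that the simultaneous diagonalization of $A$ takes place over $\overline{K}$ rather than over $K$, but this is harmless because we only use it to read off the abstract group structure of $A$ (namely that it embeds in $(\mathbb{Z}/e\mathbb{Z})^n$), not to realize $A$ as diagonal matrices over $K$ itself.
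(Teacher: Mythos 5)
Your proof is correct and follows essentially the same route as the paper: Jordan's theorem gives an abelian (equivalently, after simultaneous diagonalization over $\overline{K}$, diagonalizable) subgroup of bounded index, which is generated by at most $n$ elements, and the bounded-order quotient contributes only boundedly many further generators. The paper's version is just terser, passing to $\overline{K}$ at the outset and citing the diagonalizable form of Theorem 2.3 in \cite{Br11} directly.
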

\begin{proof}
It is enough to prove the lemma when $K$ is algebraically closed, so we can assume it.
By Theorem 2.3 in \cite{Br11}, $G$ contains a diagonalizable subgroup of bounded index. 
Since finite diagonal groups of $\GL(n,K)$  can be generated by $n$ elements, the lemma follows. 
\end{proof}

\begin{lem}
\label{ind}
Let $J$ and $r$ be positive integers. There exists a constant $L=L(J,r)\in\mathbb{N}$, only depending on $r$ and $J$, such that
if $G$ is a finite group which can be generated by $r$ elements, then $G$ has at most $L$ many subgroups of index $J$.
\end{lem}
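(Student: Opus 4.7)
The plan is to show that every index-$J$ subgroup is determined by a homomorphism from $G$ into the symmetric group $S_J$, and then to bound the number of such homomorphisms using that $G$ is $r$-generated. This is the standard ``coset action'' argument, and the only choice to be made is how to present it so that it yields a clean numerical bound.

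First, I would associate to every subgroup $H \leqq G$ of index $J$ the left coset action of $G$ on $G/H$. Choosing any bijection $G/H \leftrightarrow \{1,2,\ldots,J\}$ that sends the trivial coset $H$ to the point $1$, the coset action becomes a homomorphism $\varphi_H: G \to S_J$, and one has
\[
H = \varphi_H^{-1}\bigl(\mathrm{Stab}_{S_J}(1)\bigr).
\]
Thus the map sending $H$ (together with such a labeling) to $\varphi_H$ is injective: once the convention ``take the stabilizer of the point $1$'' is fixed, the homomorphism $\varphi_H$ determines $H$ uniquely. In particular, the number of index-$J$ subgroups of $G$ is at most the cardinality of $\mathrm{Hom}(G, S_J)$.

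Next, I would use the hypothesis that $G$ admits a generating set $\{g_1,\ldots,g_r\}$: a group homomorphism $\varphi: G \to S_J$ is completely determined by the tuple $(\varphi(g_1),\ldots,\varphi(g_r)) \in S_J^r$, so $|\mathrm{Hom}(G, S_J)| \leqq |S_J|^r = (J!)^r$. Combining the two inequalities gives a valid choice of constant, namely $L(J,r) := (J!)^r$, which depends only on $J$ and $r$ as required.

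I do not expect any real obstacle here; the only subtlety worth flagging is that distinct labelings of $G/H$ yield distinct homomorphisms with the same associated subgroup $H$, so one is counting subgroups with multiplicity at least $(J-1)!$ inside $\mathrm{Hom}(G,S_J)$. This is harmless for an upper bound (and could be used to sharpen the constant to $(J!)^r/(J-1)!$ if desired), but is not needed for the statement.
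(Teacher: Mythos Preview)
Your proof is correct and follows essentially the same argument as the paper: both associate to each index-$J$ subgroup the coset action normalized so that the trivial coset maps to a fixed point, observe that the stabilizer of that point recovers the subgroup (giving injectivity into $\Hom(G,\Sym_J)$), and then bound $|\Hom(G,\Sym_J)|$ using that $G$ is $r$-generated. The only difference is cosmetic---you make the explicit bound $L(J,r)=(J!)^r$, which the paper leaves implicit.
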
 
\begin{proof}
Fix an arbitrary finite group $G$ which can be generated by $r$ elements. We can construct an injective map of sets 
from the set of index $J$ subgroups of $G$ to the set of group homomorphisms from $G$ to the symmetric group of degree $J$. 
Since $G$ can be generated by $r$ elements the later set has boundedly many elements, hence the former set has boundedly many elements as well. So we only left with the task of constructing such an injective map.\\
Let $S$ be a set with $J$ elements. We can identify the symmetric group of degree $J$, denoted by $\Sym_J$, with the symmetry group of the set $S$. Fix an arbitrary element $x\in S$.
For every index $J$ subgroup $K\leqq G$, fix a bijection $\mu_K$ between the set of the left cosets of $K$ and the set $S$,  subject to the following condition, $K$ is mapped to the fixed element $x$, i.e. $\mu_K(K)=x$.
Let $H\leqq G$ be an arbitrary subgroup of index $J$. $G$ acts on the set of the left cosets of $H$ by left multiplication. Using the bijection $\mu_H$, this induces a group homomorphism $\phi_H: G\to \Sym_J$ .
The constructed assignment is injective as the stabilizator subgroup of $x$ in the image group $\Imag\phi_H$ uniquely determines $H$.
\end{proof}

\begin{proof}[Proof of Theorem \ref{Jor}]
Let $K$ be an arbitrary field of characteristic zero, and let $G$ be an arbitrary finite subgroup of $\GL(n,K)$. 
By Theorem 2.3 in \cite{Br11}  $G$ contains an Abelian subgroup $A\leqq G$ of index bounded by $J_0=J_0(n)$. Consider the set $S$ of the smallest index Abelian subgroups of $G$.
By Lemma \ref{bg} and Lemma \ref{ind} there exists a constant $L=L(n)$, only depending on $n$, such that $S$ has at most $L$ many elements. 
Take the intersection of the subgroups contained in $S$, it gives a characteristic Abelian subgroup of index at most $J_0^L$.
\end{proof}

Next we prove a lemma about nilpotent groups.

\begin{lem}
\label{DN}
Let $c,J$ and $m$ be positive integers. There exists a constant $C=C(c,J,m)\in \mathbb{N}$, only depending on $c,J$ and $m$, such that if
\begin{itemize}
\item $G$ is a nilpotent group of class at most $(c+1)$, 
\item $G$ can be generated by $m$ elements, 
\item the cardinality of $\gamma_{c}(G)$ is at most $J$,
\end{itemize}
then $G$ has a nilpotent subgroup $H\leqq G$ of class at most $c$ whose index is bounded by $C$. 
\end{lem}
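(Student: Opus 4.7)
The plan is to realise $H$ as the common kernel of the family of iterated-commutator homomorphisms produced by Proposition \ref{ICmap}. As a preliminary observation, since $G$ has class at most $c+1$ we have $\gamma_{c+1}(G) = [\gamma_c(G), G] = 1$, so $\gamma_c(G) \leqq \Z(G)$; in particular $\gamma_c(G)$ is a finite abelian group of order at most $J$. Proposition \ref{ICmap}, applied with $n = c+1$, then supplies, for every $(g_1, \ldots, g_c) \in G^c$ and every $j \in \{1, \ldots, c+1\}$, a group homomorphism $\varphi_{j; g_1, \ldots, g_c} \colon G \to \gamma_c(G)$ which sends $g$ to the length-$(c+1)$ iterated commutator obtained by inserting $g$ into the $j$-th slot of $(g_1, \ldots, g_c)$. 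I would set $H$ to be the intersection of the kernels of all of these homomorphisms.

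To bound $[G : H]$ I would exploit the fact that a homomorphism from an $m$-generated group to a group of order at most $J$ is determined by the images of the $m$ generators. Hence there are at most $J^m$ distinct homomorphisms $G \to \gamma_c(G)$, and a fortiori the collection of kernels arising in the definition of $H$ has at most $J^m$ distinct members. Each such kernel has index at most $|\gamma_c(G)| \leqq J$, so $H$ is an intersection of at most $J^m$ subgroups of index at most $J$, which gives $[G : H] \leqq J^{J^m}$. One may thus take $C(c, J, m) := J^{J^m}$.

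Finally, I would verify that $H$ has class at most $c$. By Proposition \ref{IC} this reduces to showing that $[h_1, h_2, \ldots, h_{c+1}] = 1$ for every $(c+1)$-tuple of elements of $H$. Given such a tuple, the homomorphism $\varphi_{1; h_2, h_3, \ldots, h_{c+1}}$ is among those defining $H$, and it sends $g$ to $[g, h_2, h_3, \ldots, h_{c+1}]$; since $h_1 \in H$ lies in its kernel, the iterated commutator vanishes, as required. The only genuinely subtle point in the argument is that the a priori infinite indexing set $G^c \times \{1, \ldots, c+1\}$ can be replaced by a finite family of distinct homomorphisms, which is what keeps the index bound effective; once that is noted, the result follows mechanically from Propositions \ref{ICmap} and \ref{IC}.
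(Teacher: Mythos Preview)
Your proof is correct and follows the same core strategy as the paper: define $H$ as the intersection of kernels of iterated-commutator homomorphisms $G\to\gamma_c(G)$ coming from Proposition~\ref{ICmap}, then invoke Proposition~\ref{IC}. The one genuine difference is in how finiteness is arranged. The paper fixes a generating set $g_1,\dots,g_m$ and only allows \emph{generators} in the first $c$ slots, producing exactly $m^c$ homomorphisms; the price is that showing $\gamma_c(H)=1$ then requires a short multilinearity argument (replacing generators by arbitrary elements of $H$ one slot at a time via Proposition~\ref{ICmap}). You instead allow \emph{all} of $G^c$ in the fixed slots---so nilpotency of $H$ is immediate, since $\varphi_{1;h_2,\dots,h_{c+1}}$ is literally one of your maps---and recover finiteness by the observation that $|\Hom(G,\gamma_c(G))|\le J^m$. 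Your route is slightly slicker and, as a bonus, yields the bound $C=J^{J^m}$ which is independent of $c$, whereas the paper's bookkeeping gives $J^{m^c}$.
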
 
\begin{proof}
Fix a generating system $g_1,...,g_m\in G$. Consider the group homomorphisms (Proposition \ref{ICmap})
\begin{gather*}
\varphi_{i_1,i_2,...,i_{c}}:G\to\gamma_{c}(G)\\
g\mapsto [[[...[[g_{i_1},g_{i_2}],g_{i_3}]...],g_{i_{c}}],g],
\end{gather*} 
where $1\leqq i_1,i_2,...,i_{c}\leqq m$, i.e. for every ordered length $c$ sequence of the generators we assign a group homomorphism using the iterated commutators.
Let $H$ be the intersection of the kernels.
$$H=\bigcap\limits_{1\leqq i_1,i_2,...,.i_{c} \leqq m} \Ker\varphi_{i_1,i_2,...,i_{c}}$$ 
Using the fact that the length $c$ iterated commutators give group homomorphisms in every variable if we fix the other variables (Proposition \ref{ICmap}), 
one can show that all the length $c$ iterated commutators of $H$ vanish. Hence $H$ is nilpotent of class at most $c$ (Proposition \ref{IC}).\\
On the other hand $H$ is the intersection of $m^c$ many subgroups of index at most $|\gamma_{c}(G)|\leqq J$. Hence the index of $H$ is bounded in terms of $c,J$ and $m$. This finishes the proof.
\end{proof}

Now we are ready to prove the main theorem of the section.

\begin{proof}[Proof of Theorem \ref{groupmain}]
Let  $K$ be an arbitrary field of characteristic zero containing all roots of unity, and let $G$ be an arbitrary finite subgroup of $\KL(n,K)$ belonging to $F$. Consider the short exact sequence of groups given by
$$1\to N\to G\to\Gamma\to 1$$
where $N=\GL(n,K)\cap G$ and $\Gamma=\Imag(G\to\Aut K)$. By Theorem \ref{Jor}, $N$ contains a characteristic Abelian subgroup of index bounded by $J=J(n)\in\mathbb{Z}^+$. 
Since $A$ is characteristic in $N$ and $N$ is normal in $G$, $A$ is a normal subgroup of $G$.\\
Consider the natural action of $G$ on the vector space $V=K^n$. Since $A$ is a finite Abelian subgroup of $\GL(V)$ and the ground field $K$ contains all roots of unity,
$A$ decomposes $V$ into common eigenspaces of its elements: $V=V_1\oplus V_2\oplus...\oplus V_r$ $(r\leqq n)$. 
As $A$ is normal in $G$, $G$ respects this decomposition, i.e. $G$ acts on the set of linear subspaces $\{V_1,V_2,...,V_r\}$ by permutations.
The kernel of this group action, denoted by $G_1$, is a bounded index subgroup of $G$ (indeed $|G:G_1|\leqq r!\leqq n!$). Furthermore, $A$ is central in $G_1$, i.e. $A\leqq \Z(G_1)$. 
To see this, notice that on an arbitrary fixed eigenspace $V_i$ $(1\leqq i\leqq r)$ $A$ acts by scalar matrices in such a way that all scalars are drawn from the set of the roots of unity. Since $G_1$ leaves $V_i$ invariant by definition and 
$\Imag(G_1\to\Aut K)$ fixes all roots of unity, our claim follows. After replacing $G$ with the bounded index subgroup $G_1$, we can assume that $A\leqq \Z(G)$.\\
As $A$ is a central subgroup of $G$, we can consider the quotient group $\overline{G}=G/A$. By Proposition \ref{CE}, we only need to prove that $\overline{G}$ has a bounded index nilpotent subgroup of class at most $c$.
Our strategy will be that, first we prove that $\overline{G}$ has a bounded index nilpotent subgroup of class at most $(c+1)$, then we will apply Lemma \ref{DN}.\\ 
Let $\overline{N}=N/A$, and consider the short exact sequence of groups
$$1\to \overline{N}\to\overline{G}\to\Gamma\to 1.$$ 
The number of elements of $\overline{N}$ is bounded by $J(n)$, by the definition of $A$, and $\Gamma$ is nilpotent of class at most $c$, by the definition of $G$.\\
$\overline{G}$ acts on $\overline{N}$ by conjugation, and the kernel of this action is the centralizer group $\Cent_{\overline{G}}(\overline{N})=\{g\in\overline{G}|\; ng=gn\;\forall n\in\overline{N}\}$. 
Therefore $\overline{G}/\Cent_{\overline{G}}(\overline{N})$ embeds into the automorphism group of $\overline{N}$ which has cardinality at most $J!$. Hence $\Cent_{\overline{G}}(\overline{N})$ has bounded index in $\overline{G}$. 
Hence, after replacing $\overline{G}$ with $\Cent_{\overline{G}}(\overline{N})$,  $\overline{N}$ with $\overline{N}\cap \Cent_{\overline{G}}(\overline{N})$ and $\Gamma$  with the image group $\Imag(\Cent_{\overline{G}}(\overline{N})\to \Gamma)$,
we can assume that  $\overline{G}$ is the central extension of the Abelian group $\overline{N}$ and nilpotent group $\Gamma$ whose nilpotency class is at most $c$. 
Therefore  we can assume that $\overline{G}$ is nilpotent of class at most $(c+1)$ (Proposition \ref{CE}).\\
Notice that $\gamma_c(\overline{G})$ maps to $\gamma_c(\Gamma)=1$, which implies that the former group is contained in $\overline{N}$. So $|\gamma_c(\overline{G})|\leqq|\overline{N}|\leqq J$. 
Hence we are in the position to apply Lemma \ref{DN}, which finishes the proof.
\end{proof}

\begin{rem}
\label{NoB}
In the above proof we only used the assumption that $G$ can be generated by $m$ elements via Lemma \ref{DN}. So if we omit this condition from Theorem \ref{groupmain}, we can still prove that there exists a constant 
$D=D(n)\in\mathbb{Z}^+$, only depending on $n$ (not even on $c$), such that if $G$ belongs to the corresponding family of groups, then $G$ contains a nilpotent subgroup $H\leqq G$ with nilpotency class at most $(c+2)$ and with index at most $D$. 
\end{rem}

\section{Proof of the Main Theorem}
\label{PMT}
Using the techniques developed in the previous sections, we will prove our main theorem.

\begin{thm}
\label{AlmostMain}
Fix a non-uniruled complex variety $Z_0$. Let $F_{Z_0}$ be the collection of 5-tuples $(X, Z,\phi, G, e)$, where
\begin{itemize}
\item $X$ is a complex variety,
\item $Z$ is a complex variety, which is birational to $Z_0$,
\item $\phi: X\dashrightarrow Z$ is a dominant rational map such that there exist open subvarieties $X_1$ of $X$ and $Z_1$ of $Z$ such that 
$\phi$ descends to a morphism between them $\phi_1:X_1\to Z_1$ with rationally connected fibres,
\item $G\leqq \Bir(X)$ is a finite group of the birational automorphism group of $X$, which also acts by birational automorphisms on $Z$ in such a way that $\phi$ is $G$-equivariant,
\item $e\in\mathbb{N}$ is the relative dimension $e=\dim X-\dim Z_0$.
\end{itemize}
Then the following claims hold.
\begin{itemize}
\item
There exist constants $\{m_{Z_0}(e)\in\mathbb{Z}^+|\,e\in\mathbb{N}\}$, only depending on the birational class of $Z_0$, such that if the 5-tuple $(X,Z,\phi, G, e)$ belongs to $F_{Z_0}$, then
$G$ can be generated by $m_{Z_0}(e)$ elements.
\item
There exist constants $\{J_{Z_0}(e)\in\mathbb{Z}^+|\,e\in\mathbb{N}\}$, only depending on the birational class of $Z_0$, such that if the 5-tuple $(X,Z,\phi, G, e)$ belongs to $F_{Z_0}$, then
$G$ has a nilpotent subgroup $H\leqq G$ of nilpotency class at most $(e+1)$ and index at most $J_{Z_0}(e)$.
\end{itemize}
\end{thm}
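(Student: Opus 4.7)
The plan is to establish both claims simultaneously by induction on $e$. The base case $e=0$ is essentially immediate: since the general fibres of $\phi$ are rationally connected of dimension zero, $\phi$ must be birational, hence $X$ is birational to the non-uniruled variety $Z_0$, and Theorem \ref{nu} together with Theorem \ref{bfsg} supply a bounded-index Abelian (hence nilpotent of class $1 = e+1$) subgroup and a generator bound, both depending only on the birational class of $Z_0$.

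For the inductive step, I would first invoke Lemma \ref{reg} to replace the input by smooth projective varieties carrying regular $G$-actions and a $G$-equivariant morphism, then apply Theorem \ref{MMP} to run a $G$-equivariant relative MMP over $Z$, producing a Mori fibre space $\varrho\colon W\to Y$ with $\psi\colon Y\to Z$, where $W$ is $G$-equivariantly birational to $X$ and the generic fibre $W_\eta\to\Spec K(Y)$ is a Fano variety with terminal singularities of dimension at most $e$. Since the image under the surjection $\varrho$ of a general (rationally connected) fibre of $\phi$ equals the corresponding fibre of $\psi$, the morphism $\psi$ still has rationally connected general fibres. Thus $(Y,Z,\psi,H,e')$ lies in $F_{Z_0}$, where $H=\Imag(G\to\Aut Y)$ and $e'=\dim Y-\dim Z_0\leq e-1$, so the inductive hypothesis furnishes a nilpotent subgroup $H_1\leq H$ of class at most $e'+1\leq e$ of bounded index, together with a generator bound for $H$.

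The next step is to embed $G$ semilinearly. Applying Proposition \ref{Fano} to the $G$-action on $W_\eta\to\Spec K(Y)$ yields an embedding $G\hookrightarrow\KL(n,K(Y))$ with $n=n(e)$, whose composite with the projection $\KL(n,K(Y))\twoheadrightarrow\Aut K(Y)$ coincides with the $G$-action on $Y$ (and so factors through $H$). The crucial observation is that because the $G$-action on $Y$ is complex-linear, the image in $\Aut K(Y)$ fixes $\mathbb{C}$ and hence every root of unity, while $K(Y)\supset\mathbb{C}$ already contains them all. I then pass to the preimage $G_1\leq G$ of $H_1$: it has bounded index, and its image in $\Aut K(Y)$ equals $H_1$, which is nilpotent of class at most $e$. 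To supply the generator hypothesis required by Theorem \ref{groupmain}, I would prove the first claim of the theorem en route: the kernel $G_0=\Ker(G\to\Aut Y)$ acts on $W_\eta$ as $K(Y)$-linear birational automorphisms of a rationally connected variety of dimension $\leq e$, so Theorem \ref{bgrc} bounds its number of generators by a constant depending only on $e$; combined with the inductive generator bound for $H$ this yields one for $G$, and the identical argument applied to any subgroup of $G_1$ (which itself belongs to $F_{Z_0}$) gives the uniform bound needed. Invoking Theorem \ref{groupmain} with $c=e$ and $n=n(e)$ then produces a nilpotent subgroup of $G_1$ of class at most $e+1$ of bounded index, which survives the bounded-index passage back to $G$.

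The main obstacle, to my mind, is the careful coordination of these disparate ingredients: verifying after the relative MMP that the intermediate variety $Y$ still fibres over $Z$ with rationally connected general fibres so that the induction on $e$ applies to $(Y,Z,\psi,H,e')$; recognizing that the $\Aut K(Y)$-image automatically fixes all roots of unity because the entire geometric picture is defined over $\mathbb{C}$; and bootstrapping the generator bound via Theorem \ref{bgrc} on the Fano generic fibre in order to satisfy the hypotheses of Theorem \ref{groupmain} and thereby upgrade ``almost nilpotent of class $e$'' on $H$ to ``almost nilpotent of class $e+1$'' on $G$.
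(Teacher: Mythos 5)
Your proposal is correct and follows essentially the same route as the paper: regularize via Lemma \ref{reg}, run the $G$-equivariant relative MMP of Theorem \ref{MMP} to get a Mori fibre space $\varrho:W\to Y$ over $Z$, apply the inductive hypothesis to $(Y,Z,\psi,H,e')$, embed $G$ semilinearly via Proposition \ref{Fano}, and conclude with Theorem \ref{groupmain}, using the observation that the image in $\Aut K(Y)$ fixes all roots of unity because everything is defined over $\mathbb{C}$. The only (immaterial) divergence is in the first claim, which the paper proves directly from the generic fibre of $\phi$ over $Z$ (via Theorems \ref{bgrc} and \ref{bfsg}) rather than inductively through the Mori fibration over $Y$ as you do; both give the required uniform generator bound.
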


\begin{proof}
(Proof of the First Claim) Let  $(X, Z,\phi, G, e)$ be an arbitrary 5-tuple belonging to $F_{Z_0}$. By Lemma \ref{reg} we can assume that both $X$ and $Z$ are smooth projective varieties, and $G$ acts on them by regular automorphisms. 
Let $\rho$ be the generic point of $Z$, and let $X_\rho$ be the generic fibre of $\phi$. $X_\rho$ is a rationally connected variety  of dimension $e$ over the function field $K(Z)$.\\
Let $G_\rho\leqq G$ be the maximal subgroup of $G$ acting fibrewise. $G_\rho$ has a natural faithful action on $X_\rho$, while $G/G_\rho=G_Z$ has a natural faithful action on $Z$. 
This gives a short exact sequence of groups
\[1\to G_\rho\to G\to G_Z\to 1.\]
By Theorem \ref{bgrc} there exists a constant $m_1(e)$, only depending on $e$, such that $G_\rho$ can be generated by $m_1(e)$ elements. 
By Theorem \ref{bfsg} there exists a constant $m_2(Z)$, only depending on the birational class of $Z$, such that $G_Z$ can be generated by $m_2(Z)$ elements.
So $G$ can be generated by $m_{Z_0}(e)=m_1(e)+m_2(Z)$ elements. Since $m_{Z_0}(e)$ only depends on $e$ and the birational class of $Z_0$, this finishes the proof of the first claim.\\
(Proof the Second Claim) We will apply induction on $e$. If $e=0$, then $X$ and $Z_0$ are birational, hence $G\leqq \Bir(Z_0)$ and the claim of the theorem follows from Theorem \ref{nu}. 
So we can assume that $e>0$ and the claim of the theorem holds if the relative dimension is strictly smaller than $e$.\\
Let $(X, Z,\phi, G, e)$ be a 5-tuple belonging to $F_{Z_0}$.
After regularizing $\phi$ in the sense of Lemma \ref{reg}, we may assume that $X$ and $Z$ are smooth projective varieties, $G$ acts on them by regular automorphisms 
and $\phi$ is a $G$-equivariant (projective) morphism.\\
Hence by Theorem \ref{MMP}, we can run a relative $G$-equivariant MMP on $\phi: X\to Z$. It results a $G$-equivariant commutative diagram
\[
\xymatrix{
X \ar@{-->}[r]^\cong \ar[rd]_{\phi} & W \ar[r]^{\varrho}\ar[d] & Y \ar[ld]^{\psi}\\
& Z
}
\]
where $\varrho:W\to Y$ is a Mori fibre space and $\psi: Y\to Z$ is a dominant morphism with rationally connected general fibres (as so does $\phi$). 
Let $H$ be the image of $G\to\Aut_{\mathbb{C}}(Y)$, and let $f$ be the relative dimension $f=\dim Y-\dim Z$. The 5-tuple $(Y, Z,\psi, H, f)$ clearly belongs to $F_{Z_0}$.
Moreover, since $f<e$, we can use the inductive hypothesis. Let $H_1\leqq H$ be the nilpotent subgroup of nilpotency class at most $(f+1)$ and index at most $J_{Z_0}(f)$. After replacing $H$ with its bounded index subgroup $H_1$ 
(and $G$ with the preimage of $H_1$), we can assume that $H$ is nilpotent of class at most $e$.\\
Let $\eta\cong\Spec K(Y)$ be the generic point of $Y$, and let $W_\eta$ be the generic fibre of $\varrho$. Since $\varrho:W\to Y$ is a Mori fibre space, $W_\eta$ is a Fano variety over $K(Y)$ with (at worst) terminal singularities. 
Furthermore, $G$ acts on the structure morphism $W_\eta\to\Spec K(Y)$ equivariantly by scheme automorphisms. Hence we can apply Proposition \ref{Fano}, and we can embed $G$ to $\KL(n,K(Y))\cong\GL(n,K(Y))\rtimes \Aut K(Y)$ where
$n=n(e)$ only depends on $e$ (since $\dim W_\eta\leqq e$). 
Moreover, the image group $\Gamma=\Imag(G\hookrightarrow\KL(n,K(Y))\twoheadrightarrow \Aut K(Y))$ corresponds to the $G$-action on $\Spec K(Y)$, therefore it corresponds to the $H$-action on $Y$.
Hence $\Gamma$ fixes all roots of unity, as $Y$ is a complex variety, and $\Gamma$ is nilpotent of class at most $e$, as so does $H$. 
Furthermore, by the first claim of the theorem, every subgroup of $G$ can be generated by $m=m_{Z_0}(e)$ elements (where $m$ only depends on $e$ and the birational class of $Z_0$). 
So we are in the position to apply Theorem \ref{groupmain} to the group $G$, which finishes the proof. 
\end{proof}

\begin{rem}
\label{NoB2}
In accordance with Remark \ref{NoB}, we need to consider bounds on the number of generators of finite subgroups of the birational automorphism group to give a more accurate bound on the nilpotency class.
\end{rem}

To close our article, we prove our main theorem.

\begin{proof}[Proof of Theorem \ref{main}]
Let $X$ be a $d$ dimensional complex variety. We can assume that $X$ is smooth and projective. We can also assume that $X$ is non-uniruled by Theorem \ref{nu}.
Let $G\leqq\Bir(X)$ be an arbitrary finite subgroup of the birational automorphism group of $X$. Let $\phi: X\dashrightarrow Z$ be the MRC fibration, and let $e=\dim X-\dim Z$ be the relative dimension.
By the functoriality of the MRC fibration (Theorem $5.5$ of Chapter $4$ in \cite{Ko96}), $G$ acts on the base $Z$ by birational automorphisms making the rational map $\phi$ $G$-equivariant.  
Hence the 5-tuple $(X,Z,\phi,G, e)$ belongs to the collection $F_Z$ defined in the previous theorem. Therefore $G$ has a nilpotent subgroup of class at most $(e+1)$ and index at most $J_Z(e)$. 
Since $e<d$ (as $X$ is non-uniruled), moreover the relative dimension $e$ and the birational class of the base $Z$ only depends on the birational class of $X$, the theorem follows.
\end{proof}

\end{document}